\documentclass[12pt]{amsart}
\usepackage{amssymb}
\usepackage{mathrsfs}
\usepackage{amsxtra}
\usepackage{graphics}
\usepackage{latexsym}
\usepackage{xcolor}
\usepackage{diagrams}
\usepackage{amsmath}
\usepackage{amssymb,amsthm,amsfonts}
\usepackage{mathtools}
\usepackage{amscd}
\usepackage[arrow, matrix, curve]{xy}
\usepackage{syntonly}
\ExecuteOptions{dvips} \marginparwidth 0pt \oddsidemargin 1.5 truecm
\evensidemargin 1.5 truecm \marginparsep 1pt \topmargin 1pt
\textheight 22.5 truecm \textwidth 14.5 truecm
\parindent 0cm

\title[Twisted functoriality]{Twisted functoriality in nonabelian Hodge theory in positive characteristic}

\author[Mao Sheng]{Mao Sheng}
\email{msheng@ustc.edu.cn}
\address{School of Mathematical Sciences,
	University of Science and Technology of China, Hefei, 230026, China}

\begin{document}
%%%%%%%%%%%%%%%%%%%% Text italic %%%%%%%%%%%%%%%%%%%%%%%%%%%%
\theoremstyle{plain}
\newtheorem{thm}{Theorem}[section]
\newtheorem{theorem}[thm]{Theorem}
\newtheorem{lemma}[thm]{Lemma}
\newtheorem{corollary}[thm]{Corollary}
\newtheorem{proposition}[thm]{Proposition}
\newtheorem{addendum}[thm]{Addendum}
\newtheorem{variant}[thm]{Variant}
%%%%%%%%%%%%%%%%%%%% Text roman %%%%%%%%%%%%%%%%%%%%%%%%%%%%%
\theoremstyle{definition}
\newtheorem{lemma and definition}[thm]{Lemma and Definition}
\newtheorem{construction}[thm]{Construction}
\newtheorem{statement}[thm]{Statement}
\newtheorem{notations}[thm]{Notations}
\newtheorem{question}[thm]{Question}
\newtheorem{problem}[thm]{Problem}
\newtheorem{remark}[thm]{Remark}
\newtheorem{remarks}[thm]{Remarks}
\newtheorem{definition}[thm]{Definition}
\newtheorem{claim}[thm]{Claim}
\newtheorem{assumption}[thm]{Assumption}
\newtheorem{assumptions}[thm]{Assumptions}
\newtheorem{properties}[thm]{Properties}
\newtheorem{example}[thm]{Example}
\newtheorem{conjecture}[thm]{Conjecture}
\newtheorem{proposition and definition}[thm]{Proposition and Definition}
\numberwithin{equation}{thm}
\newcommand{\Spec}{\mathrm{Spec}}
\newcommand{\pP}{{\mathfrak p}}
\newcommand{\sA}{{\mathcal A}}
\newcommand{\sB}{{\mathcal B}}
\newcommand{\sC}{{\mathcal C}}
\newcommand{\sD}{{\mathcal D}}
\newcommand{\sE}{{\mathcal E}}
\newcommand{\sF}{{\mathcal F}}
\newcommand{\sG}{{\mathcal G}}
\newcommand{\sH}{{\mathcal H}}
\newcommand{\sI}{{\mathcal I}}
\newcommand{\sJ}{{\mathcal J}}
\newcommand{\sK}{{\mathcal K}}
\newcommand{\sL}{{\mathcal L}}
\newcommand{\sM}{{\mathcal M}}
\newcommand{\sN}{{\mathcal N}}
\newcommand{\sO}{{\mathcal O}}
\newcommand{\sP}{{\mathcal P}}
\newcommand{\sQ}{{\mathcal Q}}
\newcommand{\sR}{{\mathcal R}}
\newcommand{\sS}{{\mathcal S}}
\newcommand{\sT}{{\mathcal T}}
\newcommand{\sU}{{\mathcal U}}
\newcommand{\sV}{{\mathcal V}}
\newcommand{\sW}{{\mathcal W}}
\newcommand{\sX}{{\mathcal X}}
\newcommand{\sY}{{\mathcal Y}}
\newcommand{\sZ}{{\mathcal Z}}
% Sonderbuchstaben mit Doppellinie
\newcommand{\A}{{\mathbb A}}
\newcommand{\B}{{\mathbb B}}
\newcommand{\C}{{\mathbb C}}
\newcommand{\D}{{\mathbb D}}
\newcommand{\E}{{\mathbb E}}
\newcommand{\F}{{\mathbb F}}
\newcommand{\G}{{\mathbb G}}
\renewcommand{\H}{{\mathbb H}}
\newcommand{\I}{{\mathbb I}}
\newcommand{\J}{{\mathbb J}}
\renewcommand{\L}{{\mathbb L}}
\newcommand{\M}{{\mathbb M}}
\newcommand{\N}{{\mathbb N}}
\renewcommand{\P}{{\mathbb P}}
\newcommand{\Q}{{\mathbb Q}}
\newcommand{\Qbar}{\overline{\Q}}
\newcommand{\R}{{\mathbb R}}
\newcommand{\SSS}{{\mathbb S}}
\newcommand{\T}{{\mathbb T}}
\newcommand{\U}{{\mathbb U}}
\newcommand{\V}{{\mathbb V}}
\newcommand{\W}{{\mathbb W}}
\newcommand{\Z}{{\mathbb Z}}
\newcommand{\g}{{\gamma}}
\newcommand{\id}{{\rm id}}
\newcommand{\rk}{{\rm rank}}
\newcommand{\END}{{\mathbb E}{\rm nd}}
\newcommand{\End}{{\rm End}}
\newcommand{\Hom}{{\rm Hom}}
\newcommand{\Hg}{{\rm Hg}}
\newcommand{\tr}{{\rm tr}}
\newcommand{\Sl}{{\rm Sl}}
\newcommand{\GL}{{\rm Gl}}
\newcommand{\Cor}{{\rm Cor}}
\newcommand{\HIG}{\mathrm{HIG}}
\newcommand{\MIC}{\mathrm{MIC}}
\newcommand{\SO}{{\rm SO}}
\newcommand{\OO}{{\rm O}}
\newcommand{\SP}{{\rm SP}}
\newcommand{\Sp}{{\rm Sp}}
\newcommand{\UU}{{\rm U}}
\newcommand{\SU}{{\rm SU}}
\newcommand{\SL}{{\rm SL}}

%added by Yichao TIAN
\newcommand{\ra}{\rightarrow}
\newcommand{\xra}{\xrightarrow}
\newcommand{\la}{\leftarrow}
\newcommand{\Nm}{\mathrm{Nm}}
\newcommand{\Gal}{\mathrm{Gal}}
\newcommand{\Res}{\mathrm{Res}}
\newcommand{\Gl}{\mathrm{Gl}}
\newcommand{\Gr}{\mathrm{Gr}}
\newcommand{\Exp}{\mathrm{Exp}}
\newcommand{\Sym}{\mathrm{Sym}}
\newcommand{\Aut}{\mathrm{Aut}}
\newcommand{\GSp}{\mathrm{GSp}}
\newcommand{\Tr}{\mathrm{Tr}}
\newcommand{\TP}{\mathrm{TP}}
\newcommand{\Ext}{\mathrm{Ext}}
\newcommand{\bA}{\mathbf{A}}
\newcommand{\bK}{\mathbf{K}}
\newcommand{\bM}{\mathbf{M}} %Shimura curves
\newcommand{\bP}{\mathbf{P}}
\newcommand{\bC}{\mathbf{C}}
%\newcommand{\Hom}{\mathrm{Hom}}
%%%%%%%%%%%%%%%%%%%%%%%%%%%%%%%%%%%%%%%%%%%%%%%%%%%%%%%%
\maketitle

\begin{abstract}
We establish the twisted functoriality in nonabelian Hodge theory in positive characteristic. As an application, we obtain a purely algebraic proof of the fact that the pullback of a semistable Higgs bundle with vanishing Chern classes is again semistable.
\end{abstract}
\section{Introduction}
In the classical nonabelian Hodge theory \cite{Sim92}, one has the following Simpson correspondence: Let $X$ be a compact K\"{a}hler manifold. There is an equivalence of categories
$$
C^{-1}_{X}: \HIG(X)\to \MIC(X),
$$
where $\HIG(X)$ is the category of polystable Higgs bundles over $X$ with vanishing first two Chern classes and $\MIC(X)$ is the category of semisimple flat bundles over $X$. The equivalence is independent of the choice of a background K\"{a}hler metric, and the following functoriality holds: Let $f: Y\to X$ be a morphism of compact K\"{a}hler manifolds. Then for any $(E,\theta)\in \HIG(X)$, one has a natural isomorphism in $\MIC(Y)$
\begin{equation}\label{functoriality over C}
C_{Y}^{-1}f^*(E,\theta)\cong f^*C^{-1}_{X}(E,\theta).
\end{equation}
In the nonabelian Hodge theory in positive characteristic \cite{OV}, Ogus-Vologodsky established an analogue of \ref{functoriality over C} for derived categories, with the $W_2$-lifting assumption on $f$ (see Theorem 3.22 \cite{OV}). In a recent preprint \cite{La2}, A. Langer proved the equality \ref{functoriality over C} under the assumption that the $W_2(k)$-lifting of $f$ is good (see Definition 5.1 and Theorem 5.3 in loc. cit.). However, such an assumption on the lifting of $f$ is quite restrictive.

Let $k$ be a perfect field of characteristic $p>0$. Let $X$ be a smooth variety over $k$ and $D$ a reduced normal crossing divisor in $X$. One forms the log smooth variety $X_{\log}$ whose log structure is the one determined by $D$. Equip $k$ and $W_2(k)$ with the trivial log structure. Assume that the log morphism $X_{\log}\to k$ is liftable to $W_2(k)$. Choose and then fix such a lifting $\tilde X_{\log}$. Then one has the inverse Cartier transform \footnote{Theorem 6.1 \cite{LSYZ19} deals with only the case of SNCD. However, a simple \'{e}tale descent argument extends the construction to the reduced NCD case.} (which is in general not an equivalence of categories without further condition on the singularities of modules along $D$)
$$
C^{-1}_{X_{\log}\subset \tilde X_{\log}}: \HIG_{\leq p-1}(X_{\log}/k)\to \MIC_{\leq p-1}(X_{\log}/k).
$$
Let $Y_{\log}=(Y,B)$ be a log smooth variety like above, together with a $W_2(k)$-lifting $\tilde Y_{\log}$. Our main result is the following analogue of \ref{functoriality over C} in positive characteristic:
\begin{theorem}\label{main result}
Notion as above. Then for any object $(E,\theta)\in \HIG_{\leq p-1}(X_{\log}/k)$, one has a natural isomorphism
$$
C_{Y_{\log}\subset \tilde Y_{\log}}^{-1}f^{\circ}(E,\theta)\cong f^*C^{-1}_{X_{\log}\subset \tilde X_{\log}}(E,\theta),
$$
where $f^{\circ}(E,\theta)$ is the twisted pullback of $(E,\theta)$.
\end{theorem}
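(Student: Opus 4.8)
The plan is to run a local-to-global argument built on the explicit local formula for the inverse Cartier transform, the essential new input being a careful bookkeeping of the lifting ambiguity of $f$ together with its Frobenius twist. First I would fix a \v{C}ech datum: choose an affine open cover $\{V_i\}$ of $Y$ so that each $V_i$ admits a log Frobenius lifting $\tilde F_{V_i}$ compatible with $\tilde Y_{\log}$, each $f(V_i)$ lies in an affine open $U_i\subseteq X$ admitting such a lifting $\tilde F_{U_i}$ compatible with $\tilde X_{\log}$, and $f|_{V_i}$ lifts to a $W_2(k)$-morphism of log schemes $\tilde f_i\colon \tilde V_{i,\log}\to\tilde U_{i,\log}$ (possible since $V_i$ is affine and $Y_{\log}$ is log smooth). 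On each $V_i$ both sides of the asserted isomorphism then acquire explicit descriptions: via $f^{*}F_{X}^{*}\cong F_{Y}^{*}f^{*}$ (the Frobenius square), the restriction of $f^{*}C^{-1}_{X_{\log}\subset\tilde X_{\log}}(E,\theta)$ is the $f$-pullback of $(F^{*}E,\ \nabla_{\mathrm{can}}+\zeta_{\tilde F_{U_i}}(F^{*}\theta))$, while the restriction of $C^{-1}_{Y_{\log}\subset\tilde Y_{\log}}(f^{\circ}(E,\theta))$ is $(F^{*}f^{*}E,\ \nabla_{\mathrm{can}}+\zeta_{\tilde F_{V_i}}(F^{*}f^{*}\theta))$ modified by the twisting datum defining $f^{\circ}$; here $\zeta_{\tilde F}=\tfrac1p d\tilde F$ and $F$ denotes the relative Frobenius.

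Next I would recall/set up the twisted pullback. The $\tilde f_i$ need not glue: the differences $g_{ij}:=\tfrac1p(\tilde f_i-\tilde f_j)\in\Gamma(V_{ij},f^{*}T_{X_{\log}/k})$ form a \v{C}ech $1$-cocycle representing the obstruction $o(f)\in H^{1}(Y,f^{*}T_{X_{\log}/k})$ to lifting $f$ over $W_2(k)$. The twisted pullback $f^{\circ}(E,\theta)$ is obtained from $f^{*}(E,\theta)$ by re-gluing along the automorphism cocycle $\{\exp(\langle g_{ij},f^{*}\theta\rangle)\}$; since $\theta\wedge\theta=0$ the endomorphism $\langle g_{ij},f^{*}\theta\rangle$ is a polynomial in $f^{*}\theta$ and commutes with it, so $f^{*}\theta$ descends and $f^{\circ}(E,\theta)\in\HIG_{\leq p-1}(Y_{\log}/k)$ is well defined, independent of all choices up to canonical isomorphism (replacing the $\tilde f_i$ alters $g_{\bullet\bullet}$ by a coboundary, hence the cocycle by a coboundary of automorphisms). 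Nilpotency of level $\leq p-1$ is precisely what makes every exponential appearing here a finite sum with invertible denominators; when $f$ is itself $W_2(k)$-liftable the twist is trivial and one recovers the functoriality of \cite{OV} and \cite{La2}.

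The core of the proof is the local comparison together with a cocycle identity. On $V_i$, the two natural $W_2(k)$-liftings $\tilde F_{U_i}\circ\tilde f_i$ and $\tilde f_i'\circ\tilde F_{V_i}$ (where $\tilde f_i'$ is the Frobenius twist of $\tilde f_i$) of $F_{X}\circ f|_{V_i}$ differ by $p\,h_i$ with $h_i\in\Gamma(V_i,F^{*}f^{*}T_{X_{\log}/k})$; a computation with them yields $f^{*}\zeta_{\tilde F_{U_i}}=\zeta_{\tilde F_{V_i}}\circ F^{*}(df)+\underline{d}\,h_i$, the last term being well defined because the differential of a Frobenius-type lifting is divisible by $p$ while the differential of any lifting of $F_{X}\circ f$ vanishes. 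Using this, $\theta\wedge\theta=0$, and the defining formula for $C^{-1}$, one checks that $\Phi_i:=\exp(\pm\langle h_i,F^{*}f^{*}\theta\rangle)$ is an isomorphism of flat bundles on $V_i$ between the two restrictions above. The main obstacle is to glue the $\Phi_i$: their discrepancy is governed by $\{h_i-h_j\}$, and the crucial identity is
\[
h_i-h_j \;=\; f^{*}h^{X}_{ij}\;-\;(df)^{\vee}h^{Y}_{ij}\;-\;\widetilde{g_{ij}}\qquad\text{on }V_{ij},
\]
where $h^{X}_{ij}=\tfrac1p(\tilde F_{U_i}-\tilde F_{U_j})$ and $h^{Y}_{ij}=\tfrac1p(\tilde F_{V_i}-\tilde F_{V_j})$ are the cochains gluing the local inverse Cartier transforms on $X$ and on $Y$, and $\widetilde{g_{ij}}$ is the Frobenius twist of $g_{ij}$ (the cocycle with $\langle\widetilde{g_{ij}},F^{*}dc\rangle=\langle g_{ij},dc\rangle^{p}$). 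The three terms on the right are annihilated respectively by the $f$-pullback of the gluing datum of $C^{-1}_{X_{\log}\subset\tilde X_{\log}}$, by the gluing datum of $C^{-1}_{Y_{\log}\subset\tilde Y_{\log}}$, and by the twist built into $f^{\circ}$, so that $\{\Phi_i\}$ satisfies the cocycle condition and patches to the asserted global isomorphism. The delicate point — and where I expect the real work — is this identity, and specifically the fact that the lifting ambiguity of $f$ enters \emph{only through its Frobenius twist} $\widetilde{g_{ij}}$: post-composing a lifting of $f$ with a lifting of Frobenius annihilates the ambiguity in the untwisted direction modulo $p^{2}$, which is exactly why the obstruction $o(f)$ has to be re-injected in twisted form on the Higgs side, i.e.\ why $f^{\circ}$, not $f^{*}$, is the correct functor.

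Finally I would dispose of the remaining points. All of the above is to be carried out with log tangent sheaves, log differentials and log Frobenius liftings; log deformation theory supplies $o(f)$ and the local $\tilde f_i$, and the divisibility statements for differentials of liftings hold verbatim, so no new phenomenon intervenes. Naturality in $(E,\theta)$ follows from the canonicity of the system $\{\Phi_i\}$ (a change of the chosen liftings produces a commuting comparison square), and since the whole construction is compatible with \'{e}tale localization, the reduced normal crossing case follows from the simple normal crossing case by \'{e}tale descent, exactly as in the construction of $C^{-1}$ itself. One may alternatively package the local liftings of $f$ into a torsor and deduce the theorem from a general statement on how $C^{-1}$ transports such twisting data, which amounts to a reformulation of the cocycle computation above.
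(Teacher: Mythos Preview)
Your proposal is essentially the paper's proof. The paper likewise chooses local log Frobenius liftings $F_{\tilde X_i}$, $F_{\tilde Y_i}$ and local $W_2$-lifts $\tilde f_i$ of $f$, defines the local comparison element $\nu_{f_i}$ by $\nu_{f_i}\circ d=\tfrac1p\bigl(F_{\tilde Y_i}^{*}\tilde f_i^{*}-\tilde f_i^{*}F_{\tilde X_i}^{*}\bigr)$ (your $h_i$ up to sign), takes $\exp(\nu_{f_i}\cdot\theta)$ as the local isomorphism of flat bundles (your $\Phi_i$), and then proves the descent/cocycle condition via the identity
\[
\nu_{f_i}-\nu_{f_j}\;=\;ob(F_Y)_{ij}+ob(f)_{ij}-ob(F_X)_{ij}
\]
in $\Gamma(Y_{ij},g^{*}T_{X_{\log}/k})$, with each term pushed through the appropriate natural map; this is precisely your displayed identity with the opposite overall sign (consistent with your deliberately unspecified $\pm$), and your $\widetilde{g_{ij}}$ is the paper's $F_Y^{*}\,ob(f)_{ij}$. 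The only cosmetic difference is that the paper works directly with an \'etale cover (to have SNCD pullbacks) rather than first doing the Zariski/SNCD argument and then invoking \'etale descent as you do at the end.
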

The twisted pullback of $(E,\theta)$ refers to a certain deformation of $f^*(E,\theta)$ along the obstruction class of lifting $f$ over $W_2(k)$. When the obstruction class vanishes, the twisted pullback is just the usual pullback. See \S2 for details. Hence, one has the following immediate consequence. 
\begin{corollary}\label{main cor}
Let $f: Y_{\log}\to X_{\log}$ be a morphism of log smooth varieties over $k$. Assume $f$ is liftable to $W_2(k)$. Then for any object $(E,\theta)\in \HIG_{\leq p-1}(X_{\log}/k)$, one has a natural isomorphism in $\MIC_{\leq p-1}(Y'_{\log}/k)$
$$
C_{Y_{\log}\subset \tilde Y_{\log}}^{-1}f^{*}(E,\theta)\cong f^*C^{-1}_{X_{\log}\subset \tilde X_{\log}}(E,\theta).
$$
\end{corollary}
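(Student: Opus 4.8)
The plan is to deduce Corollary \ref{main cor} as a direct specialization of Theorem \ref{main result}, since the entire mathematical content has already been placed in that theorem. Recall from the discussion following Theorem \ref{main result} that the twisted pullback $f^{\circ}(E,\theta)$ is a deformation of the ordinary pullback $f^{*}(E,\theta)$ along the obstruction class $o(f)$ — lying in an appropriate cohomology group, $H^{1}(Y,f^{*}T_{X_{\log}/k})$, say — to lifting $f$ to a log morphism $\tilde f\colon \tilde Y_{\log}\to \tilde X_{\log}$ over $W_{2}(k)$, and that when $o(f)=0$ this deformation is trivial, so that $f^{\circ}(E,\theta)\cong f^{*}(E,\theta)$ in $\HIG_{\leq p-1}(Y_{\log}/k)$.

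First I would note that the hypothesis of the corollary — $f$ is liftable to $W_{2}(k)$, understood relative to the chosen liftings $\tilde X_{\log}$ and $\tilde Y_{\log}$ — is by deformation theory exactly the vanishing of $o(f)$: such a lift $\tilde f$ exists if and only if this obstruction class is zero. Feeding $f^{\circ}(E,\theta)\cong f^{*}(E,\theta)$ into the natural isomorphism provided by Theorem \ref{main result},
$$
C_{Y_{\log}\subset \tilde Y_{\log}}^{-1}f^{\circ}(E,\theta)\cong f^{*}C^{-1}_{X_{\log}\subset \tilde X_{\log}}(E,\theta),
$$
then yields the desired isomorphism in $\MIC_{\leq p-1}(Y'_{\log}/k)$.

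The only point I would take care over — and the sole candidate for a real obstacle, though ultimately it is bookkeeping internal to the construction of \S2 — is the \emph{naturality} in $(E,\theta)$ of the resulting isomorphism, since the identification $f^{\circ}(E,\theta)\cong f^{*}(E,\theta)$ a priori depends on an auxiliary choice of lift $\tilde f$. Two choices $\tilde f_{0},\tilde f_{1}$ differ by a global section of $f^{*}T_{X_{\log}/k}$, and I would check that the induced comparison automorphisms are absorbed by the isomorphism of Theorem \ref{main result}, so that the composite is a well-defined functorial isomorphism independent of $\tilde f$. With the formalism of \S2 at hand this is immediate, and nothing beyond Theorem \ref{main result} and the definition of the twisted pullback is needed.
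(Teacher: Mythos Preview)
Your proposal is correct and matches the paper's approach exactly: the paper presents Corollary \ref{main cor} as an ``immediate consequence'' of Theorem \ref{main result}, noting just before the corollary that when the obstruction class $ob(f)$ vanishes the twisted pullback reduces to the usual pullback. Your extra paragraph on independence from the choice of lift $\tilde f$ is more careful than the paper itself, but this is harmless bookkeeping rather than a divergence in method.
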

The notion of \emph{twisted pullback} and the corresponding \emph{twisted functoriality} as exhibited in Theorem \ref{main result} was inspired by the work of Faltings in the $p$-adic Simpson correspondence \cite{Fa05}. It is a remarkable fact that char $p$ and $p$-adic Simpson correspondences have many features in common. As an application, we obtain the following result.
\begin{theorem}\label{semistability}
Let $k$ be an algebraically closed field and $f: (Y,B)\to (X,D)$ a morphism between smooth projective varieties equipped with normal crossing divisors over $k$. Let $(E,\theta)$ be a semistable logarithmic Higgs bundles with vanishing Chern classes over $(X,D)$. If either $\textrm{char}(k)=0$ or $\textrm{char}(k)=p>0$, $f$ is $W_2(k)$-liftable and $\rk(E)\leq p$, then the logarithmic Higgs bundle $f^*(E,\theta)$ over $(Y,B)$ is also semistable with vanishing Chern classes. 
 \end{theorem}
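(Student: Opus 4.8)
The plan is to deduce the theorem from the twisted functoriality of Corollary \ref{main cor}, propagating semistability along the Higgs-de Rham flow, after reducing to positive characteristic. If $\mathrm{char}(k)=0$, I would spread the data out over a finitely generated $\Z$-subalgebra $R\subset k$ so that, after shrinking $\Spec R$, the morphism $f$ (with its normal crossing divisors), the two polarizations and $(E,\theta)$ are defined over $R$; for a closed point $s\in\Spec R$ of sufficiently large residue characteristic $p$ one then has $\rk(E)\le p$, the reduction $f_s$ is $W_2(\kappa(s))$-liftable (every smooth projective variety lifts to $W_2$, and Theorem \ref{main result} imposes no condition on the $W_2$-obstruction of $f_s$), and $(E_s,\theta_s)$ is semistable with vanishing Chern classes, since semistability is an open condition and Chern classes are locally constant in the family. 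As the locus of $\Spec R$ over which $f^*(E,\theta)$ is semistable is open and, by the characteristic-$p$ case, nonempty, it contains the generic point. So from now on $\mathrm{char}(k)=p>0$, $\rk(E)\le p$, $f$ is $W_2(k)$-liftable; fix liftings $\tilde X_{\log}$, $\tilde Y_{\log}$ and write $C^{-1}_X$, $C^{-1}_Y$ for the associated inverse Cartier transforms.

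The Chern classes of $f^*(E,\theta)$ vanish because $c_i(f^*E)=f^*c_i(E)=0$, so the issue is semistability. First, being semistable with vanishing Chern classes, $(E,\theta)$ has nilpotent Higgs field: the coefficients of the characteristic polynomial of $\theta$ are global sections of symmetric powers of $\Omega^1_X(\log D)$, and if nonzero they would provide destabilizing spectral data. Hence the nilpotency level is $\le p-1$ since $\rk(E)\le p$, so $(E,\theta)\in\HIG_{\le p-1}(X_{\log}/k)$ and likewise $f^*(E,\theta)\in\HIG_{\le p-1}(Y_{\log}/k)$, and the inverse Cartier transforms apply. By a theorem of Lan-Sheng-Zuo (see also \cite{La2}), a rank-$\le p$ semistable logarithmic Higgs bundle with vanishing Chern classes on $(X,D)$ admits a Higgs-de Rham flow
\[
(E_0,\theta_0)=(E,\theta),\qquad (V_n,\nabla_n)=C^{-1}_X(E_n,\theta_n),\qquad (E_{n+1},\theta_{n+1})=\Gr_{\mathrm{Fil}_n}(V_n,\nabla_n),
\]
all of whose terms remain semistable with vanishing Chern classes of rank $\le p$; the underlying bundles $E_n$ therefore have uniformly bounded slope range (their Higgs fields being nilpotent of level $<p$) and fixed Chern classes, so they form a bounded family on $X$ by Langer's boundedness theorem, whence the pullbacks $\{f^*E_n\}$ form a bounded family on $Y$ and there is a constant $L$ with $\delta_n:=\mu_{\max}(f^*(E_n,\theta_n))-\mu_{\min}(f^*(E_n,\theta_n))\le L$ for all $n$ (note $\mu(f^*E_n)=0$). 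On the other hand $f^*C^{-1}_X\cong C^{-1}_Y f^*$ by Corollary \ref{main cor}, and $f^*$ commutes with passing to the associated graded along $f^*\mathrm{Fil}_n$, so $\{f^*(E_n,\theta_n)\}$ is exactly the Higgs-de Rham flow of $f^*(E,\theta)$.

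Now I would conclude by contradiction. If $f^*(E,\theta)$ is not semistable, then, restricting to a general complete intersection curve --- legitimate by Mehta-Ramanathan and compatible with the lifting, the inverse Cartier transform and the flow --- I may assume $\dim Y=1$, so the maximal destabilizing sub-Higgs-bundle $G_n\subset f^*(E_n,\theta_n)$ is a sub-bundle lying in $\HIG_{\le p-1}$. Since $C^{-1}_Y$ is an equivalence of categories, it sends $G_n$ to a flat sub-bundle of $f^*(V_n,\nabla_n)$ of degree $p\deg G_n$; passing to the associated graded preserves rank and degree, so $f^*(E_{n+1},\theta_{n+1})$ contains a sub-Higgs-sheaf of slope $p\,\mu_{\max}(f^*(E_n,\theta_n))$ and, dually, admits a quotient of slope $p\,\mu_{\min}(f^*(E_n,\theta_n))$; hence $\delta_{n+1}\ge p\,\delta_n$. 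Then $\delta_0>0$ would force $\delta_n\ge p^n\delta_0\to\infty$, contradicting $\delta_n\le L$. Therefore $\delta_0=0$, i.e.\ $f^*(E,\theta)$ is semistable, which finishes the proof.

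The hard part is the interplay of these two estimates on the flow: one must have (i) the Lan-Sheng-Zuo preservation theorem --- that in the rank-$\le p$ range the Higgs-de Rham flow keeps a Higgs bundle semistable with vanishing Chern classes, which is what makes $\{(E_n,\theta_n)\}$ a bounded family --- and (ii) the compatibility of the $W_2$-lifting, the inverse Cartier transform and the flow with restriction to a general complete intersection curve, so that the subsheaf/sub-bundle bookkeeping can be done in dimension one. The remaining ingredients --- the reduction to characteristic $p$, the vanishing of Chern classes, the nilpotence of $\theta$, and the exact factor-$p$ growth of the instability gap --- are either formal or follow directly from Corollary \ref{main cor}.
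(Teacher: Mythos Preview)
Your overall strategy---run the Higgs--de Rham flow on $X$, pull it back via Corollary~\ref{main cor}, bound the family, and derive a contradiction from the factor-$p$ growth of a destabilizing degree---is exactly the paper's strategy. But there is one genuine gap and a couple of inaccuracies.

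\textbf{The nilpotency claim is false.} You assert that a semistable logarithmic Higgs bundle with vanishing Chern classes automatically has nilpotent Higgs field, arguing via the characteristic polynomial. This is not true: take $X$ an elliptic curve, $D=\emptyset$, $E=\sO_X$, and $\theta$ a nonzero global $1$-form. This rank-one Higgs bundle is trivially semistable with $c_1=0$, yet $\theta$ is not nilpotent. Consequently you cannot set $(E_0,\theta_0)=(E,\theta)$ and apply $C^{-1}_X$ directly. The paper repairs this by invoking \cite[Theorem~A.4]{LSZ}: there is a filtration $Fil_{-1}$ on $(E,\theta)$ whose associated graded $\Gr_{Fil_{-1}}(E,\theta)$ is \emph{nilpotent} and still semistable; the flow is then launched from $(E_0,\theta_0):=\Gr_{Fil_{-1}}(E,\theta)$, and a destabilizing subsheaf $(F,\eta)\subset f^*(E,\theta)$ is first pushed to $\Gr_{f^*Fil_{-1}}(F,\eta)\subset f^*(E_0,\theta_0)$ before the factor-$p$ iteration begins. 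Once you insert this step, your argument goes through.

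\textbf{Minor points.} In the spreading-out paragraph you write that ``every smooth projective variety lifts to $W_2$''; this is well known to be false. What actually gives you the $W_2(\kappa(s))$-lift of $f_s$ is simply the $W_2$-point $\tilde s\to \Spec R$ applied to the spread-out morphism $\mathfrak f$---no separate lifting statement is needed, and this is how the paper argues. Also, your restriction to a complete intersection curve is unnecessary: since $C^{-1}_Y$ is exact (locally it is Frobenius pullback followed by an isomorphism), it carries Higgs subsheaves to flat subsheaves and multiplies degree by $p$, so the paper tracks a single positive-degree subsheaf directly on $Y$ without ever cutting down to a curve. Your $\mu_{\max}-\mu_{\min}$ bookkeeping and the Mehta--Ramanathan step are correct but add complexity the paper avoids.
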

For $\textrm{char}(k)=0$ and $D=\emptyset$, the result is due to C. Simpson by transcendental means \cite{Sim92}. Our approach is to deduce it from the char $p$ statement by mod $p$ reduction and hence is purely algebraic.

\section{Twisted pullback}
We assume our schemes are all noetherian. Let $(R,M)$ be an affine log scheme. Let $f: Y\to X$ be a morphism of log smooth schemes over $R$. Fix an $r\in \N$. Choose and then fix an element $\tau\in \Ext^1(f^*\Omega_{X/R},\sO_Y)$. The aim of this section is to define the twisted pullback along $\tau$ as a functor
$$
\TP_{\tau}: \HIG_{\leq r}(X/R)\to \HIG_{\leq r}(Y/R),
$$
under the following assumption on $r$
\begin{assumption}\label{basic assumption on r}
	$r!$ is invertible in $R$.
\end{assumption}

Let $\Omega_{X/R}$ be the sheaf of relative logarithmic K\"{a}hler differentials and $T_{X/R}$ be its $\sO_X$-dual. They are locally free of rank $\dim X-\dim R$ by log smoothness. The symmetric algebra $\Sym^{\bullet}T_{X/R}=\bigoplus_{k\geq 0} \Sym^kT_{X/R}$ on $T_{X/R}$ is $\sO_X$-algebra, and one has the following morphisms of $\sO_X$-algebras whose composite is the identity:
$$
\sO_X\to \Sym^{\bullet}T_{X/R}\to \sO_X.
$$
It defines the zero section of the natural projection $\Omega_{X/R}\to X$, where we view $\Omega_{X/R}$ as a vector bundle over $X$ (see Ex 5.18, Ch. II \cite{Ha77}). Set $$\sA_r:=\Sym^{\bullet}(T_{X/R})/\Sym^{\geq r+1}(T_{X/R}),$$ which is nothing but the structure sheaf of the closed subscheme $(r+1)X$ of $\Omega_{X/R}$ supported along the zero section. In below, we shall use the notations $\sA_r$ and $\sO_{(r+1)X}$ interchangeably. Note as $\sO_X$-module, $\sA_{r}=\sO_X\oplus T_{X/R}\oplus \cdots \oplus \Sym^{r}T_{X/R}$. The following lemma is well-known.
\begin{lemma}\label{correspondence}
The category of nilpotent (quasi-)coherent Higgs modules over $X/R$ of exponent $\leq r$ is equivalent to the category of (quasi-)coherent $\sO_{(r+1)X}$-modules.
\end{lemma}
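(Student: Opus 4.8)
The plan is to realize both categories as categories of modules over the $\sO_X$-algebra $\sA_r$, and then to apply the standard dictionary between quasi-coherent sheaves on a scheme affine over $X$ and modules over the corresponding $\sO_X$-algebra.

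First I would unwind the definition of a Higgs module $(E,\theta)$ over $X/R$: here $E$ is a (quasi-)coherent $\sO_X$-module and $\theta\colon E\to E\otimes_{\sO_X}\Omega_{X/R}$ is $\sO_X$-linear with $\theta\wedge\theta=0$. Contracting $\theta$ against local sections of $T_{X/R}$ attaches to each local logarithmic vector field $v$ an $\sO_X$-linear endomorphism $\theta_v$ of $E$; the assignment $v\mapsto\theta_v$ is $\sO_X$-linear, and the integrability condition $\theta\wedge\theta=0$ is exactly the statement that $\theta_v\circ\theta_w=\theta_w\circ\theta_v$ for all local $v,w$. Since $T_{X/R}$ is locally free of finite rank by log smoothness, such a datum is the same thing as a homomorphism of $\sO_X$-algebras $\Sym^{\bullet}T_{X/R}\to\mathcal{E}nd_{\sO_X}(E)$, i.e.\ a structure of quasi-coherent $\Sym^{\bullet}T_{X/R}$-module on $E$; and a morphism of Higgs modules is precisely a $\Sym^{\bullet}T_{X/R}$-linear morphism. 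This already gives an (iso)morphism of categories between Higgs modules over $X/R$ and quasi-coherent $\Sym^{\bullet}T_{X/R}$-modules.

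Next I would read off the nilpotency condition. By definition $(E,\theta)$ has exponent $\leq r$ iff every $(r+1)$-fold composite $\theta_{v_0}\circ\cdots\circ\theta_{v_r}$ vanishes; locally this says the image of $\Sym^{\geq r+1}T_{X/R}$ acts by zero, i.e.\ the $\Sym^{\bullet}T_{X/R}$-action factors through $\sA_r=\Sym^{\bullet}T_{X/R}/\Sym^{\geq r+1}T_{X/R}$. Hence nilpotent Higgs modules of exponent $\leq r$ correspond to quasi-coherent $\sA_r$-modules; and since $\sA_r$ is finite locally free over $\sO_X$, an $\sA_r$-module is coherent over $\sO_X$ iff it is coherent as an $\sA_r$-module, which matches the coherent Higgs modules. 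Finally $(r+1)X$ is affine over $X$ with structure sheaf $\sA_r$, so quasi-coherent (resp.\ coherent) $\sA_r$-modules are the same as quasi-coherent (resp.\ coherent) $\sO_{(r+1)X}$-modules by the usual equivalence between quasi-coherent (resp.\ coherent) sheaves on the relative spectrum of an $\sO_X$-algebra and modules over that algebra. Composing these identifications yields the lemma.

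There is no serious obstacle; the lemma is bookkeeping once the translation $\theta\leftrightarrow(\theta_v)_v$ is set up. The only points deserving attention are that $\theta\wedge\theta=0$ corresponds exactly to commutativity of the $\theta_v$ (a sign check) and that the resulting algebra action is genuinely $\sO_X$-linear rather than merely $R$-linear — both of which rest on the local freeness of $T_{X/R}$, equivalently on the log smoothness of $X/R$.
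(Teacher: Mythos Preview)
Your proposal is correct and follows essentially the same route as the paper: both arguments translate a Higgs structure into an $\sO_X$-algebra map $\Sym^{\bullet}T_{X/R}\to \mathcal{E}nd_{\sO_X}(E)$, read the nilpotency condition as factoring through $\sA_r$, and then invoke finiteness of $\sA_r$ over $\sO_X$ (equivalently of $(r{+}1)X\to X$) to match coherence on both sides. If anything, you are slightly more explicit than the paper in noting that $\theta\wedge\theta=0$ is precisely the commutativity of the $\theta_v$'s needed to land in the symmetric algebra.
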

\begin{proof}
The natural inclusion $\iota: X\to \Omega_{X/R}$ of zero section induces an equivalence of categories between the category of sheaves of abelian groups over $X$ and the category of sheaves of abelian groups over $\Omega_{X/R}$ whose support is contained in the zero section. Let $E$ be a sheaf of abelian groups over $X$. It has a Higgs module structure if it has
\begin{itemize}
\item [(i)] a ring homomorphism $\theta^0:\sO_X\to \End(E)$;
\item [(ii)] an $\sO_X$-linear homomorphism $\theta^1: T_{X/R}\to \End_{\sO_X}(E)$.
\end{itemize}
Since $\Sym^{\bullet}T_{X/R}$ is generated by $T_{X/R}$ as $\sO_X$-algebra, $\theta^0$ and $\theta^1$ together extend to a ring homomorphism
$$
\theta^{\bullet}: \Sym^{\bullet}T_{X/R}\to \End_{\sO_X}(E)\subset \End(E).
$$
If $\theta^1$ is nilpotent of exponent $\leq r$, then $\Sym^{\geq r+1}(T_{X/R})\subset \mathrm{Ann}(E)$. Therefore, we obtain an $\sA_r$-module structure on $E$. So we obtain a sheaf of $\sO_{(r+1)X}$-module. As $E$ is (quasi-)coherent as $\sO_X$-module, it is (quasi-)coherent as $\sO_{(r+1)X}$-module. Conversely, for a quasi-coherent $\sO_{\sA_r}$-module $E$, one obtains a ring homomorphism
$$
\sA_r\to \End(E).
$$
Restricting it to the degree zero part, one obtains the $\sO_X$-module structure on $E$. While restricting to the degree one component, one obtains a morphism of sheaf of abelian groups
$$
\theta: T_{X/R}\to \End(E), v\mapsto \theta_v:=\textrm{the multiplication by}\ v.
$$
Since for any $v\in T_{X/R}$, $v^{r+1}=0$ in $\sA_r$, it follows $\theta^{r+1}=0$, that is the exponent of $\theta\leq r$. For any $f\in\sO_X, v\in T_{X/R}$ and any $e\in E$, one verifies that
$$
\theta_v(fe)=\theta_{fv}(e)=f\theta_v(e),
$$
which means that the image of $\theta$ is contained in $\End_{\sO_X}(E)$. The obtained $\sO_X$-module is nothing but the pushforward of $E$ along the composite $(r+1)X\nrightarrow \Omega_{X/R}\to X$ which is finite. Therefore, $E$ is (quasi-)coherent as $\sO_X$-module if it is (quasi-)coherent as $\sO_{(r+1)X}$-module.
\end{proof}
\begin{remark}\label{equivalence between A-module and Higgs module}
An $f^*\Omega_{X/R}$-Higgs module is a pair $(E,\theta)$ where $E$ is an $\sO_Y$-module and $\theta: E\to E\otimes f^*\Omega_{X/R}$ is an $\sO_Y$-linear morphism satisfying $\theta\wedge \theta=0$. A modification of the above argument shows that the category of nilpotent (quasi-)coherent $f^*\Omega_{X/R}$-Higgs modules is equivalent to the category of (quasi-)coherent $f^*\sA_r$-modules.
\end{remark}

{\bf $1^{st}$ construction:} For an $r$ satisfying Assumption \ref{basic assumption on r}, we have a natural morphism:
$$
\exp: H^1(Y,f^*T_{X/R})\to H^1(Y, (f^*\sA_r)^*), \tau\mapsto \exp(\tau)=1+\tau+\cdots+\frac{\tau^r}{r!},
$$
where $(f^*\sA_r)^*$ is the unit group of $f^*\sA_r$. An element of $f^*\sA_r$ is invertible iff its image under $f^*\sA_r\to \sO_{Y}$ is invertible. So we obtain an $f^*\sA_r$-module $\sF^{r}_{\tau}$ of rank one.
We introduce an intermediate category $\HIG_{\leq r}(f^*\Omega_{X/R})$, which is the category of nilpotent quasi-coherent $f^*\Omega_{X/R}$-Higgs modules of exponent $\leq r$. We define the functor
$$
\TP^{\sF}_{\tau}: \HIG_{\leq r}(X/R)\to \HIG_{\leq r}(f^*\Omega_{X/R})
$$
as follows: For an $E\in \HIG_{\leq r}(X/R)$, define 
$$
\TP^{\sF}_{\tau}(E):=\sF^r_{\tau}\otimes_{f^*\sA_r}f^{*}E
$$
as $f^*\sA_r$-module. Next, for a morphism $\phi: E_1\to E_2$ in $\HIG_{\leq r}(X/R)$, $$\TP^{\sF}_{\tau}(\phi):=id\otimes f^*\phi: \TP^{\sF}_{\tau}(E_1)\to \TP^{\sF}_{\tau}(E_2)$$ is a morphism of $f^*\sA_r$-modules. One has the natural functor from $\HIG_{\leq r}(f^*\Omega_{X/R})$ to $\HIG_{\leq r}(\Omega_{Y/R})$ induced by the differential morphism $f^*\Omega_{X/R}\to \Omega_{Y/R}$. We define the functor $\TP^1_{\tau}$ to be composite of functors
$$
\HIG_{\leq r}(X/R)\stackrel{\TP^{\sF}_{\tau}}{\longrightarrow}\HIG_{\leq r}(f^*\Omega_{X/R})\to \HIG_{\leq r}(Y/R).
$$
This is how Faltings \cite{Fa05} defines twisted pullback in the $p$-adic setting, at least for those small $\tau$s. \\

{\bf $2^{nd}$ construction:} This is based on the method of \emph{exponential twisting} \cite{LSZ15}, whose basic construction is given as follows: 

{\itshape Step 0}: Take an open affine covering $\{U_{\alpha}\}_{\alpha\in \Lambda}$ of $X$ as well as an open affine covering $\{V_{\alpha}\}_{\alpha\in \Lambda}$ of $Y$ such that $f: V_{\alpha}\to U_{\alpha}$.    Let $\{\tau_{\alpha\beta}\}$ be a Cech representative of $\tau$. That is, $\tau_{\alpha\beta}\in \Gamma(V_{\alpha\beta}, f^*T_{X/R})$ satisfying the cocycle relation
$$
\tau_{\alpha\gamma}=\tau_{\alpha\beta}+\tau_{\beta\gamma}.
$$
{\itshape Step 1}: Let $(E,\theta)$ be a nilpotent Higgs module over $X$, whose exponent of nilpotency satisfies Assumption \ref{basic assumption on r}. For any $\alpha$, set $(E_{\alpha},\theta_{\alpha})=(E,\theta)|_{U_{\alpha}}$. Then one forms the various local Higgs modules $\{(f^*E_{\alpha},f^*\theta_{\alpha})\}$ via the usual pullback. \\

{\itshape Step 2}: Define
$$
G_{\alpha\beta}=\exp(\tau_{\alpha\beta}\cdot f^*\theta)=\sum_{i\geq 0}\frac{(\tau_{\alpha\beta}\cdot f^*\theta)^n}{n!}.
$$
The expression makes sense since each term $\frac{(\tau_{\alpha\beta}\cdot f^*\theta)^n}{n!}$ is well defined by assumption. Obviously, $G_{\alpha\beta}\in \Aut_{\sO_{Y}}(f^*E|_{V_{\alpha\beta}})$. Because of the cocycle relation, $\{G_{\alpha\beta}\}$ satisfies the cocycle relation
$$
G_{\alpha\gamma}=G_{\beta\gamma}G_{\alpha\beta}.
$$
Then we use the set of local isomorphism $\{G_{\alpha\beta}\}$ to glue the local $\Omega_{Y/R}$-Higgs modules $\{(f^*E_{\alpha},f^*\theta_{\alpha})\}$, to obtain a new Higgs module over $Y$. The verification details are analogous to \S2.2 \cite{LSZ15}. It is tedious and routine to verify the glued Higgs module, up to natural isomorphism, is independent of the choice of affine coverings and Cech representatives of $\tau$. We denote it by $\TP^2_{\tau}(E)$. For a morphism $\phi: E_1\to E_2$ of Higgs modules, it is not difficult to see that $f^*\phi$ induces a morphism $\TP^2_{\tau}(\phi):\TP^2_{\tau}(E_1)\to \TP^2_{\tau}(E_2)$. 
\begin{proposition}\label{Faltings twisted pullback is exponential twisting}
The two functors $\TP^1_{\tau}$ and $\TP^2_{\tau}$ are naturally isomorphic. 
\end{proposition}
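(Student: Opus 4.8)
The plan is to show that, over a common affine covering, both functors glue the same local data along the same cocycle. Fix open affine coverings $\{U_\alpha\}$ of $X$ and $\{V_\alpha\}$ of $Y$ with $f(V_\alpha)\subseteq U_\alpha$ and a Cech representative $\{\tau_{\alpha\beta}\}$ of $\tau$ as in Step 0. First I would describe $\TP^1_\tau(E)$ locally. Since symmetric tensors of degree $1$ are commutative in $f^*\sA_r$, the map $\exp$ is a homomorphism on cocycles, so it sends $\{\tau_{\alpha\beta}\}$ to the cocycle $\{\exp(\tau_{\alpha\beta})\}$ for $(f^*\sA_r)^*$; hence $\sF^r_\tau$ is the $f^*\sA_r$-module obtained by gluing the free rank-one modules $f^*\sA_r|_{V_\alpha}$ via multiplication by $\exp(\tau_{\alpha\beta})$ on $V_{\alpha\beta}$. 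Tensoring over $f^*\sA_r$ with $f^*E$ then identifies $\TP^{\sF}_\tau(E)|_{V_\alpha}$ with $f^*E|_{V_\alpha}$ equipped with its tautological $f^*\sA_r$-action, i.e.\ with the pullback Higgs module $(f^*E_\alpha,f^*\theta_\alpha)$ under the equivalence of Remark \ref{equivalence between A-module and Higgs module}, the transition over $V_{\alpha\beta}$ being $e\mapsto\exp(\tau_{\alpha\beta})\cdot e$ with $\cdot$ the $f^*\sA_r$-action.

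The crux is to recognise this transition as $G_{\alpha\beta}$. Under Remark \ref{equivalence between A-module and Higgs module}, a local section $v$ of $f^*T_{X/R}\subseteq f^*\sA_r$ acts on $f^*E$ as the contraction $\iota_v\circ f^*\theta\in\End_{\sO_Y}(f^*E)$; the relation $f^*\theta\wedge f^*\theta=0$ makes these contractions commute, so $v^n$ acts as $(\iota_v\circ f^*\theta)^n$ and, using Assumption \ref{basic assumption on r}, $\exp(v)=\sum_{n=0}^{r}v^n/n!$ acts as $\exp(\iota_v\circ f^*\theta)$. Taking $v=\tau_{\alpha\beta}$ gives precisely $\exp(\tau_{\alpha\beta}\cdot f^*\theta)=G_{\alpha\beta}$. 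Thus $\TP^{\sF}_\tau(E)$ is the $f^*\Omega_{X/R}$-Higgs module glued from $\{(f^*E_\alpha,f^*\theta_\alpha)\}$ via $\{G_{\alpha\beta}\}$; applying the natural functor induced by the $\sO_Y$-linear differential $f^*\Omega_{X/R}\to\Omega_{Y/R}$, which commutes with each $G_{\alpha\beta}$, exhibits $\TP^1_\tau(E)$ and $\TP^2_\tau(E)$ as gluings of the same local $\Omega_{Y/R}$-Higgs modules along the same cocycle, hence as canonically isomorphic.

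It remains to upgrade this to a natural isomorphism of functors. The isomorphism above restricts on each $V_\alpha$ to the identity of $f^*E|_{V_\alpha}$, so for any morphism $\phi\colon E_1\to E_2$ in $\HIG_{\leq r}(X/R)$ it intertwines $\TP^1_\tau(\phi)$ with $\TP^2_\tau(\phi)$, both of which are locally $f^*\phi$; and since each construction is already independent, up to canonical isomorphism, of the covering and of the Cech representative of $\tau$, the resulting isomorphism is well defined independently of these choices. The only genuine work is the bookkeeping of the second paragraph — verifying that multiplication by a degree-one tensor is the contraction $\iota_v\circ f^*\theta$ in the correct direction, matching the exponential series term by term, and checking compatibility on triple overlaps against the cocycle convention $G_{\alpha\gamma}=G_{\beta\gamma}G_{\alpha\beta}$ so that the local identifications glue; all of this is routine given Lemma \ref{correspondence} and Remark \ref{equivalence between A-module and Higgs module}.
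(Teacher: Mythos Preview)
Your proposal is correct and follows essentially the same approach as the paper: the paper's proof is the two-line observation that, via the equivalence of Remark \ref{equivalence between A-module and Higgs module}, the action of $\exp(\tau_{\alpha\beta})\in (f^*\sA_r)^*$ on $f^*E|_{V_{\alpha\beta}}$ is exactly $G_{\alpha\beta}$. You have simply unpacked this into a careful local-gluing argument with the bookkeeping spelled out.
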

\begin{proof}
One uses the equivalence in Remark \ref{equivalence between A-module and Higgs module}. It suffices to notice that the element $\exp(\tau_{\alpha\beta})\in f^*\sA_r$ has its image $G_{\alpha\beta}$ in $\Aut_{\sO_{Y}}(f^*E|_{V_{\alpha\beta}})$.
\end{proof}
By the above proposition, we set $\TP_{\tau}$ to be either of $\TP^i_{\tau}, i=1.2$.
\begin{proposition}
The functor $\TP_{\tau}$ has the following properties:
\begin{itemize}
	\item [(i)] it preserves rank;
	\item [(ii)] it preserves direct sum;
	\item [(iii)] Let $E_i, i=1,2$ be two nilpotent Higgs modules over $X/R$ whose exponents of nilpotency satisfies $(r_1+r_2)!$ being invertible in $R$. Then there is a canonical isomorphism of Higgs modules over $Y$:
	$$
	\TP_{\tau}(E_1\otimes E_2) \cong \TP_{\tau}(E_1)\otimes\TP_{\tau}(E_2).
	$$
\end{itemize} 
\end{proposition}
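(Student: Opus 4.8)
The plan is to establish all three properties through the second, gluing-theoretic description $\TP_\tau=\TP^2_\tau$, where each assertion becomes Zariski-local on $Y$ and reduces to elementary identities for truncated exponentials of nilpotent endomorphisms. I fix throughout affine covers $\{U_\alpha\}$ of $X$ and $\{V_\alpha\}$ of $Y$ with $f(V_\alpha)\subset U_\alpha$, together with a Cech representative $\{\tau_{\alpha\beta}\}$ of $\tau$ as in Step 0.

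For (i) and (ii) the point is that, by construction, $\TP_\tau(E)|_{V_\alpha}$ is the $\sO_{V_\alpha}$-module $f^*E|_{V_\alpha}$, the gluing data $G_{\alpha\beta}\in\Aut_{\sO_Y}(f^*E|_{V_{\alpha\beta}})$ being $\sO_Y$-linear. Hence $\TP_\tau(E)$ is Zariski-locally isomorphic to $f^*E$ as an $\sO_Y$-module, and since pullback preserves rank (generically, resp. locally freely), so does $\TP_\tau$, giving (i). For (ii), when $(E,\theta)=(E_1,\theta_1)\oplus(E_2,\theta_2)$ the field $f^*\theta=f^*\theta_1\oplus f^*\theta_2$ is block diagonal, hence so is $G_{\alpha\beta}=\exp(\tau_{\alpha\beta}\cdot f^*\theta)=\exp(\tau_{\alpha\beta}\cdot f^*\theta_1)\oplus\exp(\tau_{\alpha\beta}\cdot f^*\theta_2)$; the gluing respects the splitting $f^*E=f^*E_1\oplus f^*E_2$, giving a natural isomorphism $\TP_\tau(E_1\oplus E_2)\cong\TP_\tau(E_1)\oplus\TP_\tau(E_2)$ of Higgs modules. (Equivalently, in the first construction both facts follow at once from $\sF^r_\tau$ being an invertible $f^*\sA_r$-module.)

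The substance is (iii). Since $\binom{r_1+r_2}{r_1}\in\Z$, both $r_1!$ and $r_2!$ divide $(r_1+r_2)!$ and hence are invertible in $R$, so $\TP_\tau$ is defined on each of $E_1$, $E_2$ and on $(E_1\otimes E_2,\,\theta_1\otimes 1+1\otimes\theta_2)$, whose Higgs field has exponent $\leq r_1+r_2$. Over $V_{\alpha\beta}$ the gluing cocycle of $\TP_\tau(E_1\otimes E_2)$ is
$$
G^{12}_{\alpha\beta}=\exp\bigl(\tau_{\alpha\beta}\cdot(f^*\theta_1\otimes 1+1\otimes f^*\theta_2)\bigr).
$$
Writing $A=\tau_{\alpha\beta}\cdot f^*\theta_1\otimes 1$ and $B=1\otimes\tau_{\alpha\beta}\cdot f^*\theta_2$, these commute and satisfy $A^{r_1+1}=0=B^{r_2+1}$; summing $\tfrac{(A+B)^n}{n!}=\sum_{i+j=n}\tfrac{A^i}{i!}\tfrac{B^j}{j!}$ over $0\leq n\leq r_1+r_2$ and discarding the (already vanishing) terms with $i>r_1$ or $j>r_2$ gives $\exp(A+B)=\exp(A)\exp(B)$, so $G^{12}_{\alpha\beta}=G^1_{\alpha\beta}\otimes G^2_{\alpha\beta}$. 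Therefore $\TP_\tau(E_1\otimes E_2)$ is the gluing of $\{f^*E_{1,\alpha}\otimes_{\sO_{V_\alpha}}f^*E_{2,\alpha}\}$ along $\{G^1_{\alpha\beta}\otimes G^2_{\alpha\beta}\}$, which is exactly $\TP_\tau(E_1)\otimes_{\sO_Y}\TP_\tau(E_2)$, and the local Higgs fields $f^*\theta_1\otimes 1+1\otimes f^*\theta_2$ glue to the tensor-product Higgs field on the right. Independence of the cover and the Cech representative, already established for $\TP_\tau$ separately, upgrades this to a canonical isomorphism.

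The one genuine point needing care in (iii) is that the truncated identity $\exp(A+B)=\exp(A)\exp(B)$ must hold exactly at level $r_1+r_2$ — this is where the divisibility $r_1!\,r_2!\mid(r_1+r_2)!$ is used, and why no coarser truncation would do — together with correctly matching the tensor-product Higgs structure (the iterated field $\theta^n$ and the $\sA$-module structure it induces through the coalgebra structure of $\Sym^\bullet T_{X/R}$, cf.\ Remark~\ref{equivalence between A-module and Higgs module}) on the two sides. A cleaner but less explicit alternative is to note that the comultiplication of the bialgebra $\Sym^\bullet T_{X/R}$ descends to $\bar\Delta\colon\sA_{r_1+r_2}\to\sA_{r_1}\otimes_{\sO_X}\sA_{r_2}$, under which $\exp(\tau)$ pulls back to $\exp(\tau)\otimes\exp(\tau)$ and $f^*(E_1\otimes E_2)$ to $f^*E_1\otimes f^*E_2$, whence $\TP^{\sF}_\tau(E_1\otimes E_2)=\sF^{r_1+r_2}_\tau\otimes_{f^*\sA_{r_1+r_2}}f^*(E_1\otimes E_2)\cong\TP^{\sF}_\tau(E_1)\otimes_{\sO_Y}\TP^{\sF}_\tau(E_2)$.
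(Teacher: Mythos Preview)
Your argument is correct and follows essentially the same route as the paper: there (i) and (ii) are simply declared obvious, and for (iii) the paper likewise uses the second construction and the identity $\exp(A+B)=\exp(A)\exp(B)$ for the commuting nilpotents $A=\tau_{\alpha\beta}\cdot f^*\theta_1\otimes\id$ and $B=\id\otimes\tau_{\alpha\beta}\cdot f^*\theta_2$. Your write-up merely supplies more of the details, and the closing bialgebra/comultiplication alternative is a pleasant extra not present in the paper.
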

\begin{proof}
The first two properties are obvious. To approach (iii), one uses the second construction. Note that when $\tau=0$, it is nothing but the fact $f^*(E_1\otimes E_2)=f^*E_1\otimes f^*E_2$. When the exponents $r_i, i=1,2$ satisfies the condition, one computes that
	$$
	\exp(\tau\cdot (f^*\theta_1\otimes id+id\otimes f^*\theta_2))=\exp(\tau\cdot f^*\theta_1\otimes id)\exp(id\otimes\tau\cdot f^*\theta_2),
	$$
using the equality
	$$
	(f^*\theta_1\otimes id)(id\otimes f^*\theta_2)=(id\otimes f^*\theta_2)(f^*\theta_1\otimes id)=f^*\theta_1\otimes f^*\theta_2.
	$$
\end{proof}

To conclude this section, we shall point out that there is one closely related construction that works for all $r\in \N$. \\
 
{\bf $3^{rd}$ construction:} Note that the element $\tau\in \Ext^1(f^*\Omega_X,\Omega_Y)\cong \Ext^1(\sO_Y,f^*T_{X/R})$ corresponds to an extension of $\sO_Y$-modules
$$
0\to f^*T_{X/R}\to \sE_{\tau}\stackrel{pr}{\to} \sO_{Y} \to 0.
$$
Notice that $\sE_{\tau}$ admits a natural $f^*\Sym^{\bullet}T_{X/R}$-module structure: In degree zero, this is $\sO_{Y}$-structure; in degree one,
$$
f^*T_{X/R}\otimes_{\sO_{Y}}\sE_{\tau}\stackrel{id\otimes pr}{\longrightarrow} f^*T_{X/R}\otimes_{\sO_{Y}}\sO_{Y}=f^*T_{X/R}\subset \sE_{\tau},
$$
and therefore $\theta: f^*T_{X/R}\to \End_{\sO_{Y}}(\sE_{\tau})$. By construction, $\theta\neq 0$ but $\theta^2=0$. For any $r\in \N$, set
$$
\sE^r_{\tau}:=\Sym^r\sE_{\tau}.
$$
The proof of the next lemma is straightforward.
\begin{lemma}\label{basic property of E^r}
	For any $r\in \N$, $\sE^r_{\tau}$ is a nilpotent $f^*\Omega_{X/R}$-Higgs bundle of exponent $r$. It admits a filtration $F^{\bullet}$ of $f^*\Omega_{X/R}$-Higgs subbundles:
	$$
	\sE^r_{\tau}=F^0\supset F^1\supset\cdots \supset F^r\supset 0,
	$$
	whose associated graded $Gr_{F^{\bullet}}E^r_{\tau}$ is naturally isomorphic to $f^*\sA_r$. When $\tau=0$, $\sE^r_{\tau}=f^*\sA_r$ as $f^*\Omega_{X/R}$-Higgs bundle.
\end{lemma}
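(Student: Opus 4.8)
The plan is to equip $\sE^r_\tau=\Sym^r\sE_\tau$ with the symmetric-power Higgs field coming from $(\sE_\tau,\theta)$, deduce nilpotency of exponent $r$ from the relation $\theta^2=0$ on $\sE_\tau$, construct the filtration $F^\bullet$ out of the defining extension $0\to f^*T_{X/R}\to\sE_\tau\to\sO_Y\to 0$, and finally match the graded object with $f^*\sA_r$. Concretely, I would first note that $\sE_\tau$, being an extension of the line bundle $\sO_Y$ by the locally free sheaf $f^*T_{X/R}$, is locally free, hence so is $\sE^r_\tau=\Sym^r\sE_\tau$; the Higgs field is the one given on local monomials by the Leibniz rule $\theta^{(r)}_v(x_1\cdots x_r)=\sum_{i=1}^r x_1\cdots\theta_v(x_i)\cdots x_r$, which one checks is $\sO_Y$-linear in $v$, symmetric in the $x_i$ (so well defined on $\Sym^r$), and satisfies $\theta^{(r)}\wedge\theta^{(r)}=0$ because $\theta\wedge\theta=0$ on $\sE_\tau$. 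Equivalently, under Assumption \ref{basic assumption on r} this is the $r$-th symmetric power of $(\sE_\tau,\theta)$, cut out of $(\sE_\tau,\theta)^{\otimes r}$ by the symmetrizer in the symmetric monoidal category of $f^*\Omega_{X/R}$-Higgs bundles, or, via Remark \ref{equivalence between A-module and Higgs module}, the transport of the usual $\Sym^r$ of $f^*\sA_1$-modules.

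For nilpotency the key local identities are $\theta_v(\sE_\tau)\subseteq f^*T_{X/R}$ and $\theta_w(f^*T_{X/R})=0$, which together give $\theta_w\theta_v=0$ on $\sE_\tau$ for all local sections $v,w$ of $f^*T_{X/R}$. Plugging this into the Leibniz expansion, any composite $\theta^{(r)}_{v_1}\cdots\theta^{(r)}_{v_{r+1}}$ of $r+1$ operators, applied to a length-$r$ monomial, must in every surviving summand apply two operators to a single factor and therefore vanishes; so the exponent of $\theta^{(r)}$ is at most $r$. Applying instead $r$ operators to $s^r$, where $s$ is a local section of $\sE_\tau$ lifting $1\in\sO_Y$, produces $r!\,v_1\cdots v_r$, which is nonzero since $r!$ is invertible (assuming $f^*T_{X/R}\neq 0$, the degenerate case being trivial); hence the exponent is exactly $r$.

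For the filtration, I would set $F^j\sE^r_\tau$ to be the image of the multiplication map $\Sym^j(f^*T_{X/R})\otimes_{\sO_Y}\Sym^{r-j}\sE_\tau\to\Sym^r\sE_\tau$. Since $\sO_Y$ is locally free, the defining extension is locally split, and a local splitting $\sE_\tau\cong f^*T_{X/R}\oplus\sO_Y$ identifies $F^j$ with the direct summand $\bigoplus_{i\ge j}\Sym^i(f^*T_{X/R})$; thus each $F^j$ is a subbundle, $F^0=\sE^r_\tau$, $F^{r+1}=0$, and there is a canonical splitting-independent isomorphism $F^j/F^{j+1}\cong\Sym^j(f^*T_{X/R})$, so that $\mathrm{Gr}_{F^\bullet}\sE^r_\tau\cong\bigoplus_{j=0}^r\Sym^j(f^*T_{X/R})=f^*\sA_r$ as $\sO_Y$-modules. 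Because $\theta_v$ maps $\sE_\tau$ into $f^*T_{X/R}$, the Leibniz rule gives $\theta^{(r)}(F^j)\subseteq F^{j+1}\otimes f^*\Omega_{X/R}\subseteq F^j\otimes f^*\Omega_{X/R}$, so every $F^j$ is a Higgs subbundle and $\theta^{(r)}$ induces on the graded a map $\Sym^j(f^*T_{X/R})\to\Sym^{j+1}(f^*T_{X/R})$; a short Leibniz computation (using $\theta_v(s)=v$) identifies this induced map with $(r-j)$ times multiplication by $v$, and rescaling the $j$-th graded piece by the unit $(r-j)!/r!$ turns it into multiplication by $v$, which is exactly the Higgs field that Lemma \ref{correspondence} attaches to $f^*\sA_r$. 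When $\tau=0$ the extension splits canonically, $\sE_\tau\cong f^*T_{X/R}\oplus\sO_Y\cong f^*\sA_1$ as $f^*\Omega_{X/R}$-Higgs bundles, so $\sE^r_\tau\cong\Sym^r(f^*\sA_1)\cong f^*\sA_r$, now with no rescaling since the filtration is split.

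The routine verifications would be that $\theta^{(r)}$ is a genuine Higgs field and that $F^\bullet$ together with the isomorphisms $F^j/F^{j+1}\cong\Sym^j(f^*T_{X/R})$ do not depend on the local splittings chosen. The only step that will require genuine care is the identification of $\mathrm{Gr}\,\theta^{(r)}$ with the Higgs field of $f^*\sA_r$: one must keep track of the combinatorial coefficient $(r-j)$ produced by the Leibniz expansion and absorb it into the rescaling $(r-j)!/r!$, which is precisely the point where Assumption \ref{basic assumption on r} enters.
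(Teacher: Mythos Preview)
The paper omits the proof entirely (declaring it ``straightforward''), so there is no argument to compare against; your sketch is precisely the natural elaboration one would give. The Leibniz construction of $\theta^{(r)}$, the filtration $F^j=\mathrm{im}\bigl(\Sym^j(f^*T_{X/R})\otimes\Sym^{r-j}\sE_\tau\to\Sym^r\sE_\tau\bigr)$, and the identification $F^j/F^{j+1}\cong\Sym^j(f^*T_{X/R})$ are all correct.

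You have in fact uncovered an imprecision in the lemma rather than merely supplied a proof. The statement reads ``for any $r\in\N$'', yet the two places where you invoke Assumption~\ref{basic assumption on r} cannot be removed: the exponent is exactly $r$ only because $\theta^{(r)}_{v_1}\cdots\theta^{(r)}_{v_r}(s^r)=r!\,v_1\cdots v_r$, and the isomorphism $\mathrm{Gr}_{F^\bullet}\sE^r_\tau\cong f^*\sA_r$ \emph{as Higgs bundles} needs the rescaling by the units $(r-j)!/r!$. Without the assumption the induced Higgs field on the $j$-th graded piece is $(r-j)$ times multiplication by $v$; e.g.\ for $r=p$ in characteristic $p$ one finds $\theta^{(p)}=0$ on $\sE^p_\tau$, so the exponent is strictly less than $p$ and the graded is \emph{not} isomorphic to $f^*\sA_p$ as an $f^*\sA_p$-module. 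As an $\sO_Y$-module the graded is always $f^*\sA_r$, and perhaps that is all the author intends for the middle clause; but the final sentence of the lemma (``as $f^*\Omega_{X/R}$-Higgs bundle'') certainly requires $r!$ invertible.

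One small slip on your side: in the $\tau=0$ clause you assert $\Sym^r(f^*\sA_1)\cong f^*\sA_r$ ``with no rescaling since the filtration is split''. The splitting only yields $\sE^r_0\cong\mathrm{Gr}\,\sE^r_0$; identifying the latter with $f^*\sA_r$ as a Higgs bundle still requires the same factorial rescaling, because the Leibniz field on $\Sym^r(f^*\sA_1)$ sends $1^{r-j}\cdot t$ to $(r-j)\,1^{r-j-1}\cdot vt$, not to $1^{r-j-1}\cdot vt$. So the $\tau=0$ case is no exception and also needs Assumption~\ref{basic assumption on r}.
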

By the lemma, $\sE^r_{\tau}$ is an $f^*\sA_r$-module of rank one. Therefore, one may replace the tensor module in the definition of $\TP^{\sF}_{\tau}$ with $\sE^{r}_{\tau}$. This defines a new functor $\TP^{\sE}_{\tau}$ and hence the third twisted pullback functor $\TP^3_{\tau}$.

\begin{remark}
	When one is interested only in coherent objects, one may drop the nilpotent condition in the construction. This is because by Cayley-Hamilton, there is an element of form
	$v^r-a_1v^{r-1}+\cdots+(-1)^ra_r\in \Sym^{\bullet}T_{X/R}$ annihilating $E$, so that $\Sym^{\bullet}T_{X/R}$-module structure on $E$ factors though $\Sym^{\bullet}T_{X/R}\to \sA_r$.
\end{remark}
We record the following statement for further study. 
\begin{proposition}
Assume $r\in \N$ satisfy Assumption \ref{basic assumption on r}. Then as $f^*\sA_r$-modules,
\begin{itemize}
	\item [(i)] $\sF^r_{\tau}\cong \sE^r_{\tau}$ for $r\leq 1$;
	\item [(ii)] $\sF^r_{\tau}\ncong \sE^r_{\tau}$ for $r>1$.
\end{itemize}
\end{proposition}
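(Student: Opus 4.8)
The natural approach is to compare the two rank-one $f^*\sA_r$-modules through their classifying $1$-cocycles, using the open affine cover $\{V_\alpha\}$ and the Cech representative $\{\tau_{\alpha\beta}\}$ of $\tau$ fixed in the second construction. By definition $\sF^r_\tau$ is glued from the local trivial modules $f^*\sA_r|_{V_\alpha}$ by multiplication with $\exp(\tau_{\alpha\beta})\in(f^*\sA_r)^*$. For $\sE^r_\tau=\Sym^r\sE_\tau$ one first notes that over each $V_\alpha$ the extension $0\to f^*T_{X/R}\to\sE_\tau\to\sO_Y\to 0$ splits; fixing a local lift $e_\alpha\in\sE_\tau$ of $1$, the power $e_\alpha^r$ is, under the dictionary of Remark \ref{equivalence between A-module and Higgs module} applied to the derivation Higgs field on $\Sym^r\sE_\tau$, a local $f^*\sA_r$-basis of $\sE^r_\tau$. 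Expanding $e_\beta^r=(e_\alpha+\tau_{\alpha\beta})^r$ and rewriting each monomial $e_\alpha^{r-k}\,\tau_{\alpha\beta}^k$ via the $f^*\sA_r$-action on $e_\alpha^r$ (which, because of the derivation, carries the factors $r!/(r-k)!$) yields the classifying cocycle of $\sE^r_\tau$ as an explicit polynomial in $\tau_{\alpha\beta}$.

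Granting this, part (i) is formal. For $r=0$ both sheaves are $\sO_Y$. For $r=1$ the group $H^1(Y,(f^*\sA_1)^*)$ is abelian, $\exp$ is the map $\tau\mapsto 1+\tau$, and the cocycle of $\sE^1_\tau=\sE_\tau$ is again $\{1+\tau_{\alpha\beta}\}$; the global isomorphism is obtained by gluing the local identifications $f^*\sA_1|_{V_\alpha}\cong\sE_\tau|_{V_\alpha}$, $1\mapsto e_\alpha$.

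For part (ii) one must show the two classes are distinct in $H^1(Y,(f^*\sA_r)^*)$ when $r\ge 2$. I would use the filtration of the unit group by the normal subsheaves $1+f^*\Sym^{\ge m}T_{X/R}$, whose graded pieces are $\sO_Y^*$ and the additive sheaves $f^*\Sym^m T_{X/R}$ for $1\le m\le r$: both cocycles agree in the $\sO_Y^*$- and $\Sym^1$-graded pieces (giving $1$ and $[\tau]$), so any discrepancy must be detected in $\bigoplus_{m\ge 2}H^1(Y,f^*\Sym^m T_{X/R})$, where by the two cocycle formulas it is a universal expression in the cup-squares of $\tau$; one then finishes by exhibiting a log smooth $f$ and a class $\tau$ (for instance with $H^1(Y,f^*T_{X/R})$ carrying a nonzero cup-square) for which this obstruction class does not vanish. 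The step I expect to be the main obstacle is exactly this: carrying out the symmetric-power cocycle bookkeeping carefully enough to isolate the \emph{genuine} obstruction in the $\Sym^{\ge 2}$-part — as opposed to a spurious term that can be absorbed by re-choosing the local generators $e_\alpha$ — and then producing geometry in which it survives; the surrounding homological algebra (splitting the unipotent filtration, comparing graded pieces) is routine.
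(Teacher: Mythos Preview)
Your treatment of (i) is correct and essentially matches the paper's.

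For (ii), your plan has a fatal gap. Carry out the computation you describe: with the derivation Higgs field on $\Sym^r\sE_\tau$, the element $\tau^k\in f^*\sA_r$ acts on $e_\alpha^r$ as $\frac{r!}{(r-k)!}\,e_\alpha^{r-k}\tau^k$, hence $e_\alpha^{r-k}\tau^k=\frac{(r-k)!}{r!}\,\tau^k\cdot e_\alpha^r$ inside the $f^*\sA_r$-module. Substituting into the binomial expansion,
\[
e_\beta^r=\sum_{k}\binom{r}{k}e_\alpha^{r-k}\tau_{\alpha\beta}^k
=\sum_{k}\binom{r}{k}\frac{(r-k)!}{r!}\,\tau_{\alpha\beta}^k\cdot e_\alpha^r
=\sum_{k}\frac{\tau_{\alpha\beta}^k}{k!}\cdot e_\alpha^r
=\exp(\tau_{\alpha\beta})\cdot e_\alpha^r.
\]
The binomial coefficient exactly cancels the derivation factor, so the classifying cocycle of $\sE^r_\tau$ in $H^1(Y,(f^*\sA_r)^*)$ is \emph{identical} to that of $\sF^r_\tau$. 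The ``genuine obstruction in the $\Sym^{\geq 2}$-part'' you propose to isolate is zero; your filtration argument cannot separate the two modules.

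The paper proceeds differently. Working in the relative-curve case, it fixes local $\sO_Y$-bases (not $f^*\sA_r$-generators) of each module chosen so that the $\sO_Y$-transition matrices coincide, and then writes out the Higgs field in those bases: for $\sF^r_\tau$ one finds the regular nilpotent with $1$'s on the superdiagonal, while for $\sE^r_\tau$ the superdiagonal reads $\tfrac{1}{r},\tfrac{1}{r-1},\dots,1$. The paper concludes from this that the $f^*\sA_r$-structures differ for $r\geq 2$. In light of your cocycle calculation you should scrutinize that step: the two displayed nilpotent matrices are conjugate over $\sO_Y$ (rescale the $i$-th basis vector by $(r-i)!$), so a local mismatch of matrices does not by itself rule out a global $f^*\sA_r$-isomorphism, and indeed the cocycle identity above shows that with the derivation structure the two rank-one modules represent the same class. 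Either the intended $f^*\sA_r$-structure on $\sE^r_\tau$ is \emph{not} the one induced by derivation (in which case your whole setup for (ii) starts from the wrong module), or statement (ii) itself warrants re-examination.
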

\begin{proof}
Obviously, $\sF^0_{\tau}\cong \sE^0_{\tau}\cong \sO_Y$. Assume $r\geq 1$. We illustrate our proof by looking at the case of $X/R$ being a relative curve. We describe $\sE^r_{\tau}$ in terms of local data: Take $r=1$ first. Let $U_{\alpha}$ be an open subset of $X$ with $\partial_{\alpha}$ a local basis of $\Gamma(U_{\alpha}, T_{X/R})$. Assume that $V_{\alpha}$ to be an open subset of $Y$ such that $f: V_{\alpha}\to U_{\alpha}$. We may assume the gluing functions between two different local basis are identity. Let $\{\tau_{\alpha\beta}\}$ be a Cech representative of $\tau$. Write $\tau_{\alpha\beta}=a_{\alpha\beta}f^*\partial_{\alpha\beta}$. Then $\sE_{\tau}$ is the $\sO_{Y}$-module obtained by gluing $\{\sO_{V_{\alpha}}\oplus f^*T_{U_{\alpha}/R}\}$ via the following gluing matrix:
$$
 \left(
   \begin{array}{c}
     1 \\
     f^*\partial_{\alpha} \\
   \end{array}
 \right)=\left(
           \begin{array}{cc}
             1 & a_{\alpha\beta} \\
             0 & 1 \\
           \end{array}
         \right)\left(
                  \begin{array}{c}
                    1 \\
                      f^*\partial_{\beta} \\
                  \end{array}
                \right).
$$
Under the assumption for $r$, $\sE^r_{\tau}$ is obtained by gluing $$\{f^*\sA_{r}|_{U_{\alpha}}=\sO_{V_{\alpha}}\oplus f^*T_{U_{\alpha}/R}\oplus\cdots \oplus f^*T^{\otimes r}_{U_{\alpha}/R}\}$$ via the gluing matrix:
$$
\left(
  \begin{array}{c}
    \frac{1}{r!} \\
      \frac{f^*\partial_{\alpha}}{(r-1)!}  \\
    \vdots\\
     f^*\partial^{r-1}_{\alpha} \\
   f^*\partial^r_{\alpha} \\
  \end{array}
\right)=\left(
          \begin{array}{ccccc}
            1& a_{\alpha\beta} & \frac{a^2_{\alpha\beta}}{2!} & \ldots & \frac{a^r_{\alpha\beta}}{r!} \\
            0& 1 & a_{\alpha\beta} & \ldots & \frac{a^{r-1}_{\alpha\beta}}{(r-1)!} \\
            \vdots &  \vdots &  \ddots &  \ddots &  \vdots \\
            0 & 0 & \ldots & 1 & a_{\alpha\beta} \\
          0 & 0 & \ldots & \ldots & 1 \\
          \end{array}
        \right)\left(
  \begin{array}{c}
    \frac{1}{r!} \\
      \frac{f^*\partial_{\beta}}{(r-1)!}  \\
    \vdots\\
     f^*\partial^{r-1}_{\beta} \\
   f^*\partial^r_{\beta} \\
  \end{array}
\right).
$$
As comparison, $\sF^r_{\tau}$ is obtained by gluing $\{f^*\sA_{r}|_{U_{\alpha}}\}$ via the following transition functions
$$
\left(
\begin{array}{c}
1 \\
f^*\partial_{\alpha}  \\
\vdots\\
f^*\partial^{r-1}_{\alpha} \\
f^*\partial^r_{\alpha} \\
\end{array}
\right)=\left(
\begin{array}{ccccc}
1& a_{\alpha\beta} & \frac{a^2_{\alpha\beta}}{2!} & \ldots & \frac{a^r_{\alpha\beta}}{r!} \\
0& 1 & a_{\alpha\beta} & \ldots & \frac{a^{r-1}_{\alpha\beta}}{(r-1)!} \\
\vdots &  \vdots &  \ddots &  \ddots &  \vdots \\
0 & 0 & \ldots & 1 & a_{\alpha\beta} \\
0 & 0 & \ldots & \ldots & 1 \\
\end{array}
\right)\left(
\begin{array}{c}
1 \\
f^*\partial_{\beta}  \\
\vdots\\
f^*\partial^{r-1}_{\beta} \\
f^*\partial^r_{\beta} \\
\end{array}
\right).
$$
Therefore, $\sF^r_{\tau}$ and $\sE^{r}_{\tau}$ are isomorphic as $\sO_Y$-modules. However, when $r\geq 2$, the Higgs structures of these two bundles differ: For $\sF^r_{\tau}$, the Higgs field along $\partial_{\alpha}$ is given by 
$$
\theta_{\partial_\alpha}\left(
\begin{array}{c}
1 \\
f^*\partial_{\alpha}  \\
\vdots\\
f^*\partial^{r-1}_{\alpha} \\
f^*\partial^r_{\alpha} \\
\end{array}
\right)=\left(
\begin{array}{ccccc}
0& 1 & 0 & \ldots & 0 \\
0& 0 & 1 & \ldots & 0 \\
\vdots &  \vdots &  \ddots &  \ddots &  \vdots \\
0 & 0 & \ldots & 0 & 1 \\
0 & 0 & \ldots & \ldots & 0 \\
\end{array}
\right)\left(
\begin{array}{c}
1 \\
f^*\partial_{\alpha}  \\
\vdots\\
f^*\partial^{r-1}_{\alpha} \\
f^*\partial^r_{\alpha} \\
\end{array}
\right),
$$
while for Higgs field action for $\sE^r_{\tau}$ is given by
$$
\theta_{\partial_\alpha}\left(
\begin{array}{c}
\frac{1}{r!} \\
\frac{f^*\partial_{\alpha}}{(r-1)!}  \\
\vdots\\
f^*\partial^{r-1}_{\alpha} \\
f^*\partial^r_{\alpha} \\
\end{array}
\right)=\left(
\begin{array}{ccccc}
0& \frac{1}{r} & 0 & \ldots & 0 \\
0& 0 & \frac{1}{r-1} & \ldots & 0 \\
\vdots &  \vdots &  \ddots &  \ddots &  \vdots \\
0 & 0 & \ldots & 0& 1 \\
0 & 0 & \ldots & \ldots & 0 \\
\end{array}
\right)\left(
\begin{array}{c}
\frac{1}{r!} \\
\frac{f^*\partial_{\alpha}}{(r-1)!}  \\
\vdots\\
f^*\partial^{r-1}_{\alpha} \\
f^*\partial^r_{\alpha} \\
\end{array}
\right).
$$

\end{proof}

\section{Twisted functoriality}
Now we come back to the setting in \S1. First we make the following
\begin{definition}\label{twisted pullback}
Let $k$, $f: Y_{\log}\to X_{\log}$ and $\tilde X_{\log}, \tilde Y_{\log}$ be as in \S1. For a Higgs module $(E,\theta)\in \HIG_{\leq p-1}(X_{\log}/k)$.  Then the twisted pullback $f^\circ(E,\theta)$ is defined to be $\TP_{ob(f)}(E,\theta)$, where $ob(f)$ is the obstruction class of lifting $f$ to a morphism $\tilde Y_{\log}\to \tilde X_{\log}$ over $W_2(k)$.
\end{definition}
Assume that $f$ admits a $W_2(k)$-lifting $\tilde f$. In Langer's proof of functoriality Theorem 5.3 \cite{La19}, the existence of local logarithmic Frobenius liftings $F_{\tilde X_{\log}}$ and $F_{\tilde Y_{\log}}$ such that $F_{\tilde X_{\log}}\circ \tilde f=\tilde f\circ F_{\tilde Y_{\log}}$ is crucial-this is where the condition of $\tilde f$ being good enters. However, one notices that any local logarithmic Frobenius liftings on $\tilde X$ and $\tilde Y$ commute with $\tilde f$ \emph{up to homotopy}. A heuristic reasoning shows that this homotopy should be intertwined with the homotopies caused by local logarithmic Frobenius liftings of both $\tilde X$ and $\tilde Y$, as well as the one caused by local liftings of the morphism (no $W_2$-lifting on $f$ is assumed any more). Turning this soft homotopy argument into exact differential calculus in positive characteristic yields the proof for the claimed twisted functoriality.

To start with proof of Theorem \ref{main result}, we take an \'{e}tale covering $\mathcal X=\coprod_i X_i\to X$ with $X_i$ affine and the pullback of $D$ along each $X_i\to X$ simple normal crossing. Then we take an \'{e}tale covering $\pi: \mathcal Y=\coprod_i Y_i\to Y$ with similar properties and $f$ restricts to a local morphism $f_i: Y_{i,\log}\to X_{i,\log}$ for each $i$. For each $i$, we choose logarithmic Frobenius lifting over $W_2(k)$
$$
F_{\tilde X_{i,\log}}: \tilde X_{i,\log}\to \tilde X_{i,\log},\quad F_{\tilde Y_{i,\log}}: \tilde Y_{i,\log}\to \tilde Y_{i,\log},
$$
and also a $W_2(k)$-lift $\tilde f_i: \tilde Y_{i,\log}\to \tilde X_{i,\log}$. Such local lifts exist. Set
$$
(V_1,\nabla_1)=C_{Y_{\log}\subset \tilde Y_{\log}}^{-1}f^{\circ}(E,\theta),\quad (V_2,\nabla_2)=f^*C^{-1}_{X_{\log}\subset \tilde X_{\log}}(E,\theta).
$$
In below, we exhibit an isomorphism between $(V_i,\nabla_i),i=1,2$ after pulling back to the \'{e}tale covering $\sY$ which satisfies the descent condition. The whole proof is therefore divided into two steps.\\

{\itshape Step 1: Isomorphism over $\sY$}\\
As $\sY$ is a disjoint union of open affine log schemes $\{Y_{i,\log}\}$s, it suffices to construct an isomorphism for each open affine. In the foregoing argument, we drop out the subscript $i$ everywhere. Notice first that the two morphisms $\tilde f^*\circ F_{\tilde X_{\log}}^*$ and $F_{\tilde Y_{\log}}^*\circ \tilde f^*$ coincide after reduction modulo $p$. Thus, it defines an element
$$
\nu_{f}\in \Hom_{\sO_{X}}(\Omega_{X_{\log}/k},\sO_Y)
$$
such that
$$
\nu_f\circ d=\frac{1}{p}(F_{\tilde Y_{\log}}^*\circ \tilde f^*-\tilde f^*\circ F_{\tilde X_{\log}}^*).
$$
So we get $\nu_f\cdot \theta\in \Gamma(Y,\End_{\sO_Y}(g^*E))$, where $g=F_X\circ f=f\circ F_Y$.
\begin{lemma}
$\exp(\nu_f\cdot \theta)$ defines an isomorphism $(V_1,\nabla_1)\to (V_2,\nabla_2)$. That is, there is a commutative diagram:
$$\CD
  V_1 @> \exp(\nu_f\cdot \theta) >> V_2 \\
  @V \nabla_1 VV @VV  \nabla_2V  \\
  V_1\otimes \Omega_{Y_{\log}/k} @>\exp(\nu_f\cdot \theta)\otimes id>> V_2\otimes \Omega_{Y_{\log}/k}.
\endCD$$
\end{lemma}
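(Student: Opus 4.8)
The plan is to work over the fixed affine chart and identify $\exp(\nu_f\cdot\theta)$ with the gauge transformation between two explicitly described connections on a single $\sO_Y$-module. First, since $f$ carries the $W_2(k)$-lift $\tilde f$, the obstruction class $ob(f)$ vanishes, so $f^{\circ}(E,\theta)=f^*(E,\theta)$; writing $g=F_X\circ f=f\circ F_Y$, the underlying $\sO_Y$-module of $(V_2,\nabla_2)$ is $f^*F_X^*E=g^*E$ and that of $(V_1,\nabla_1)$ is $F_Y^*f^*E=g^*E$, and these agree under the canonical identification induced by $F_X\circ f=f\circ F_Y$. Hence $\exp(\nu_f\cdot\theta)$ is to be read as an element of $\Aut_{\sO_Y}(g^*E)$, well defined because $(E,\theta)\in\HIG_{\leq p-1}$ forces every $p$-fold product of the local components of $\theta$ to vanish, so that $(\nu_f\cdot\theta)^p=0$, while $(p-1)!$ is invertible in $k$ by Assumption \ref{basic assumption on r}.

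Next I would make both connections explicit. Over the chart, with respect to the chosen log Frobenius lift $\tilde F$, the inverse Cartier transform sends $(E,\theta)$ to $F^*E$ with connection $\nabla_{\mathrm{can}}+(\mathrm{id}\otimes\zeta)(F^*\theta)$, where $\nabla_{\mathrm{can}}$ is the canonical connection coming from Frobenius descent and $\zeta=\tfrac1p\,d\tilde F\colon F^*\Omega_{\log}\to\Omega_{\log}$. Thus, writing $\theta=\sum_i\theta_i\,\omega_i$ in local log coordinates, on $g^*E$
$$
\nabla_2=f^*\nabla^X_{\mathrm{can}}+\sum_i(g^*\theta_i)\,f^*\big(\zeta_X(F_X^*\omega_i)\big),\qquad
\nabla_1=\nabla^Y_{\mathrm{can}}+\sum_i(g^*\theta_i)\,\zeta_Y\big(F_Y^*(f^*\omega_i)\big),
$$
with $f^*\omega_i$ the pullback of the log form $\omega_i$. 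Here $f^*\nabla^X_{\mathrm{can}}=\nabla^Y_{\mathrm{can}}$ on $g^*E=f^*F_X^*E=F_Y^*f^*E$ — the Frobenius-descent connection is functorial for pullback, both connections killing the frame pulled back along $g$ from a frame of $E$ — and each $g^*\theta_i$ is horizontal for it. Consequently $\nabla_2-\nabla_1$ is $\sO_Y$-linear, lies in $\Gamma\big(Y,\End_{\sO_Y}(g^*E)\otimes\Omega_{Y_{\log}/k}\big)$, and equals $\sum_i(g^*\theta_i)\,\delta_i$ with $\delta_i:=f^*\big(\zeta_X(F_X^*\omega_i)\big)-\zeta_Y\big(F_Y^*(f^*\omega_i)\big)$.

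The heart of the matter is the identity of maps $\Omega_{X_{\log}/k}\to\Omega_{Y_{\log}/k}$
$$
f^*\circ\zeta_X\circ F_X^*\;-\;\zeta_Y\circ F_Y^*\circ f^*\;=\;-\,d\circ\nu_f ,
$$
i.e. $\delta_i=-d\big(\nu_f(\omega_i)\big)$ for every $i$. This should follow from the defining relation $\nu_f\circ d=\tfrac1p\big(F_{\tilde Y}^*\tilde f^*-\tilde f^*F_{\tilde X}^*\big)$ by applying $d$: both $\tilde f^*\circ F_{\tilde X}^*$ and $F_{\tilde Y}^*\circ\tilde f^*$ are $W_2(k)$-lifts of $g^*$, the difference $\tilde f^*F_{\tilde X}^*-F_{\tilde Y}^*\tilde f^*$ is $-p$ times a lift of $\nu_f$, and applying $\tfrac1p\,d$ reduces this modulo $p$ to $-d\circ\nu_f$; the log Frobenius lifts being logarithmic ensures the primitives occurring are integral, so the computation stays inside $\Omega_{\log}$. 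Granting this, and using $\nabla_{\mathrm{can}}(g^*\theta_i)=0$, one gets $\nabla_2-\nabla_1=-\sum_i d(\nu_f(\omega_i))\,(g^*\theta_i)=-\nabla_{\mathrm{can}}(\nu_f\cdot\theta)$. Finally, because $\theta\wedge\theta=0$ the $\theta_i$ commute, so $\nu_f\cdot\theta$ commutes both with $\nabla_{\mathrm{can}}(\nu_f\cdot\theta)$ and with the Higgs-twist term of $\nabla_1$; the gauge-transformation formula then collapses, with no higher-order corrections, to $\exp(\nu_f\cdot\theta)_*\nabla_1=\nabla_1-\nabla_{\mathrm{can}}(\nu_f\cdot\theta)=\nabla_2$, which is precisely the commutativity of $(\exp(\nu_f\cdot\theta)\otimes\mathrm{id})\circ\nabla_1=\nabla_2\circ\exp(\nu_f\cdot\theta)$.

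The step I expect to be the main obstacle is the displayed identity $f^*\zeta_XF_X^*-\zeta_YF_Y^*f^*=-d\nu_f$: turning the informal observation that the local log Frobenius liftings commute with $\tilde f$ ``up to homotopy'' into this precise equation in the logarithmic de Rham complex modulo $p$, while keeping the $\tfrac1p\,d$-bookkeeping over $W_2(k)$ and the two log structures consistent. Once it is in hand the remaining steps are formal, the exponential algebra being controlled by $\theta\wedge\theta=0$ and the invertibility of $(p-1)!$. (Gluing these chart-wise isomorphisms into a global isomorphism over $\sY$ and then descending to $Y$ — which is where the remaining discrepancies, namely distinct log Frobenius lifts on $X$ and on $Y$ and distinct $W_2(k)$-lifts of $f$, come in — is the business of the following descent step, not of this lemma.)
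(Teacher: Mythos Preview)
Your proposal is correct and follows essentially the same route as the paper: both identify $V_1=V_2=g^*E$ locally, write $\nabla_1,\nabla_2$ explicitly via the Deligne--Illusie maps $\zeta_X,\zeta_Y$, and reduce the commutativity to the identity $f^*\zeta_XF_X^*-\zeta_YF_Y^*f^*=-d\nu_f$ (which the paper verifies by plugging in log coordinates $x_i$ and computing $d(\nu_f\cdot\theta)=\sum_i g^*\theta_i\cdot du_i$ with $u_i=\nu_f(dx_i)$), together with the commutation coming from $\theta\wedge\theta=0$. The only cosmetic difference is packaging: the paper checks both sides on the generating sections $g^*e$, whereas you phrase the same computation as the gauge--transformation identity $\exp(\nu_f\cdot\theta)_*\nabla_1=\nabla_1-\nabla_{\mathrm{can}}(\nu_f\cdot\theta)=\nabla_2$.
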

\begin{proof}
Recall that over $Y$, $V_1=V_2=g^*E$. So $\exp(\nu_f\cdot E)$ defines an isomorphism from $V_1$ to $V_2$. Moreover, the connections are given by
$$
\nabla_1=\nabla_{can}+(id\otimes \frac{dF_{\tilde Y_{\log}}}{p})(F_Y^*f^*\theta),
$$
and respectively by
$$
\nabla_2=f^*(\nabla_{can}+(id\otimes \frac{dF_{\tilde X_{\log}}}{p})(F_X^*\theta)).
$$
Now we are going to check the commutativity of the above diagram. Take a local section $e\in E$. Then
$$
\exp(\nu_f\cdot \theta)\otimes id\circ \nabla_1(g^*e)= \exp(\nu_f\cdot \theta)(id\otimes \frac{dF_{\tilde Y_{\log}}}{p})(F_Y^*f^*\theta(e)).
$$
On the other hand, $\nabla_2\circ \exp(\nu_f\cdot \theta)(e)$ equals
$$
\exp(\nu_f\cdot \theta)d(\nu_f\cdot \theta)(g^*e)+\exp(\nu_f\cdot \theta)(f^*(id\otimes \frac{dF_{\tilde X_{\log}}}{p})(F_X^*\theta(e))).
$$
We take a system of local coordinates $\{x_i\}$ for $\tilde X$ and use the same notion for its reduction modulo $p$. Write $\theta=\sum_i\theta_idx_i$,
and $\nu_f=\sum_iu_i\partial_{x_i}$ with $u_i\in \sO_Y$. Thus
$$
d(\nu_f\cdot \theta)=d(\sum_ig^*\theta_i\cdot u_i).
$$
As $d$ is $\sO_X$-linear, it equals
$$
\sum_ig^*\theta_i\cdot du_i=\sum_ig^*\theta_i\cdot d(\frac{(F_{\tilde Y_{\log}}^*\circ \tilde f^*-\tilde f^*\circ F_{\tilde X_{\log}}^*)(x_i)}{p}).
$$
So $d(\nu_f\cdot \theta)(g^*e)=\sum_ig^*\theta_i(e)\cdot \frac{(F_{\tilde Y_{\log}}^*\circ \tilde f^*-\tilde f^*\circ F_{\tilde X_{\log}}^*)(x_i)}{p}$. On the other hand,
\begin{eqnarray*}
(id\otimes \frac{dF_{\tilde Y_{\log}}}{p})(F_Y^*f^*\theta(e))&=& \sum_i g^*\theta_i(e)\cdot (\frac{id\otimes dF_{\tilde Y_{\log}}}{p})(F_Y^*f^*(dx_i))\\
&=&\sum_ig^*\theta_i(e)\cdot \frac{d(F_{\tilde Y_{\log}}^*\tilde f^*(x_i))}{p},
\end{eqnarray*}
and similarly,
\begin{eqnarray*}
f^*(id\otimes \frac{dF_{\tilde X_{\log}}}{p})(F_X^*\theta(e))&=& \sum_i g^*\theta_i(e)\cdot \frac{f^*d(F^*_{\tilde X_{\log}}(x_i))}{p}\\
&=&\sum_i g^*\theta_i(e)\cdot \frac{d(\tilde f^*F^*_{\tilde X_{\log}}(x_i))}{p}.
\end{eqnarray*}
This completes the proof.

\end{proof}
{\itshape Step 2: Descent condition}\\

In Step 1, we have constructed an isomorphism $\exp(\nu_f\cdot \theta): \pi^*(V_1,\nabla_1)\to \pi^*(V_2,\nabla_2)$ whose restriction to $Y_{i,\log}$ is given by $\exp(\nu_{f_i}\cdot \theta)$. Let $p_i: \sY\times_Y \sY\to \sY, i=1,2$ be two projections. In below, we show that
$$
p_1^*(\exp(\nu_f\cdot \theta))=p_2^*(\exp(\nu_f\cdot \theta)).
$$
The obstruction class $ob(F_X)$ (resp. $ob(F_Y)$ and $ob(f)$) of lifting $F_X$ (resp. $F_Y$ and $f$) over $W_2$ has its Cech representative landing in $\Gamma(X_{ij},F^*_{X}T_{X_{\log}/k})$ (resp. $\Gamma(Y_{ij},F^*_{Y}T_{Y_{\log}/k})$ and $\Gamma(Y_{ij},f^*T_{X_{\log}/k})$). We have the following natural maps:
$$
f^*: H^1(X,F_X^*T_{X_{\log}/k})\to H^1(Y,f^*F_X^*T_{X_{\log}/k})=H^1(Y,g^*T_{X_{\log}/k}),
$$
$F_Y^*: H^1(Y,f^*T_{X_{\log}/k})\to H^1(Y,g^*T_{X_{\log}/k})$, and 
$$
f_*: H^1(Y,F_Y^*T_{Y_{\log}/k})\to H^1(Y,F_Y^*f^*T_{X_{\log}/k})=H^1(Y,g^*T_{X_{\log}/k}),
$$
which is induced by $f_*: T_{Y_{\log}/k}\to f^*T_{X_{\log}/k}$.
\begin{lemma}\label{cech relation}
One has an equality in $\Gamma(Y_{ij},g^*T_{X_{\log}/k})$, where $Y_{ij}=Y_i\times_YY_j$:
$$
\nu_{f_i}-\nu_{f_j}=ob(F_Y)_{ij}+ob(f)_{ij}-ob(F_X)_{ij}
$$
where we understand the obstruction classes as their images via the natural morphisms. Consequently, there is an equality in $H^1(Y,g^*T_{X_{\log}/k})$:
$$
[\nu_{f_i}-\nu_{f_j}]=ob(F_Y)+ob(f)-ob(F_X).
$$
\end{lemma}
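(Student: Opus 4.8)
The plan is to reduce the stated \v{C}ech-cochain identity to a single elementary telescoping identity among ring homomorphisms over $W_2(k)$, divide by $p$, and then pass to cohomology classes. First I record what the objects are, locally. Over $Y_{ij}$ the maps $F_{\tilde X_i}^{*},F_{\tilde X_j}^{*}$ (resp.\ $F_{\tilde Y_i}^{*},F_{\tilde Y_j}^{*}$; resp.\ $\tilde f_i^{*},\tilde f_j^{*}$) are genuine morphisms of the common local liftings $\tilde X|_{X_{ij}}$, $\tilde Y|_{Y_{ij}}$, and in each pair the two members reduce modulo $p$ to the same ring homomorphism, namely $F_X^{*}$, $F_Y^{*}$, resp.\ $f^{*}$. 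Hence $\frac1p(F_{\tilde X_i}^{*}-F_{\tilde X_j}^{*})=ob(F_X)_{ij}\circ d$, $\frac1p(F_{\tilde Y_i}^{*}-F_{\tilde Y_j}^{*})=ob(F_Y)_{ij}\circ d$ and $\frac1p(\tilde f_i^{*}-\tilde f_j^{*})=ob(f)_{ij}\circ d$ are, respectively, the $F_X^{*}$-, $F_Y^{*}$- and $f^{*}$-twisted derivations representing the three obstruction cochains; likewise $\nu_{f_i}\circ d=\frac1p(F_{\tilde Y_i}^{*}\tilde f_i^{*}-\tilde f_i^{*}F_{\tilde X_i}^{*})$ is a $g^{*}$-twisted derivation $\sO_X\to\sO_Y$, since $F_Xf=fF_Y=g$.

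The main step is the purely formal identity of maps $\sO_{\tilde X}\to\sO_{\tilde Y}$ on $Y_{ij}$,
\begin{multline*}
(F_{\tilde Y_i}^{*}\tilde f_i^{*}-\tilde f_i^{*}F_{\tilde X_i}^{*})-(F_{\tilde Y_j}^{*}\tilde f_j^{*}-\tilde f_j^{*}F_{\tilde X_j}^{*})\\
=(F_{\tilde Y_i}^{*}-F_{\tilde Y_j}^{*})\,\tilde f_i^{*}+F_{\tilde Y_j}^{*}\,(\tilde f_i^{*}-\tilde f_j^{*})-(\tilde f_i^{*}-\tilde f_j^{*})\,F_{\tilde X_i}^{*}-\tilde f_j^{*}\,(F_{\tilde X_i}^{*}-F_{\tilde X_j}^{*}),
\end{multline*}
obtained by inserting and cancelling the cross terms $F_{\tilde Y_j}^{*}\tilde f_i^{*}$ and $\tilde f_j^{*}F_{\tilde X_i}^{*}$. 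Now divide by $p$. In each of the four terms on the right, exactly one factor is already a multiple of $p$ (it is one of the three obstruction cochains), so the remaining ring homomorphism may be replaced by its reduction mod $p$, and pre- or post-composing a twisted derivation with a ring homomorphism transports it along the induced map of logarithmic tangent sheaves. Thus the first term becomes the image of $ob(F_Y)_{ij}$ under $f_*\colon F_Y^{*}T_{Y_{\log}/k}\to g^{*}T_{X_{\log}/k}$; the second becomes the image of $ob(f)_{ij}$ under $F_Y^{*}\colon f^{*}T_{X_{\log}/k}\to g^{*}T_{X_{\log}/k}$; the fourth becomes $-1$ times the image of $ob(F_X)_{ij}$ under $f^{*}\colon F_X^{*}T_{X_{\log}/k}\to g^{*}T_{X_{\log}/k}$; and the third term becomes $-\,ob(f)_{ij}\circ(d\circ F_X^{*})$, which vanishes because $d\circ F_X^{*}\equiv 0\pmod p$ (the relative Frobenius sends functions to $p$-th powers and multiplies the log structure by $p$). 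Summing the surviving three terms gives the asserted equality in $\Gamma(Y_{ij},g^{*}T_{X_{\log}/k})$, under exactly the three natural identifications.

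Finally, the cohomological statement is immediate: the families $\{\nu_{f_i}-\nu_{f_j}\}$, $\{ob(F_Y)_{ij}\}$, $\{ob(f)_{ij}\}$ and the pullback along $f$ of $\{ob(F_X)_{ij}\}$ are \v{C}ech $1$-cocycles for $\{Y_i\}$ valued in $g^{*}T_{X_{\log}/k}$, and taking classes in $H^1$ yields $[\nu_{f_i}-\nu_{f_j}]=ob(F_Y)+ob(f)-ob(F_X)$ with the same identifications. The algebra is trivial; the real work — and the only place an error could slip in — is the bookkeeping: identifying, for each of the four terms, the precise map of twisted tangent sheaves along which its twisted derivation is transported, so that all four land in $g^{*}T_{X_{\log}/k}$ via exactly the morphisms $f_*$, $F_Y^{*}$, $f^{*}$ introduced before the lemma, and recognizing that the a priori fourth contribution $-(\tilde f_i^{*}-\tilde f_j^{*})F_{\tilde X_i}^{*}$ drops out because $d$ annihilates $F_X^{*}$ modulo $p$ — omitting that observation would produce a spurious extra term and a false statement.
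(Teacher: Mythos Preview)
Your proof is correct and follows essentially the same route as the paper: the identical telescoping identity (obtained by inserting $F_{\tilde Y_j}^{*}\tilde f_i^{*}$ and $\tilde f_j^{*}F_{\tilde X_i}^{*}$), the same interpretation of the four terms as the three obstruction cochains transported into $g^{*}T_{X_{\log}/k}$, and the same observation that the term $(\tilde f_i^{*}-\tilde f_j^{*})F_{\tilde X_i}^{*}$ vanishes because $d\circ F_X^{*}\equiv 0\pmod p$. The only difference is presentational---you write one consolidated identity where the paper splits the computation into two steps---and your bookkeeping of which map ($f_*$, $F_Y^{*}$, $f^{*}$) carries each cochain is slightly more explicit than the paper's.
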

\begin{proof}
First, we observe the following identity
\begin{eqnarray*}
\frac{1}{p}(F^*_{\tilde Y_{i,\log}}\circ \tilde f_i^*-F^*_{\tilde Y_{j,\log}}\circ \tilde f_j^*)&=& \frac{1}{p}[(F^*_{\tilde Y_{i,\log}}-F^*_{\tilde Y_{j,\log}})\circ \tilde f_i^*]+\frac{1}{p}[F^*_{\tilde Y_{j,\log}}\circ (\tilde f_i^*-\tilde f_j^*)]\\
&=& \frac{F^*_{\tilde Y_{i,\log}}-F^*_{\tilde Y_{j,\log}}}{p}\circ f_i^*+F_{Y_{j}}^*\circ \frac{\tilde f_i^*-\tilde f_j^*}{p}.
\end{eqnarray*}
It follows that
\begin{eqnarray*}
(\nu_{f_i}-\nu_{f_j})\circ d&=&\frac{1}{p}(F^*_{\tilde Y_{i,\log}}\circ \tilde f_i^*-F^*_{\tilde Y_{j,\log}}\circ \tilde f_j^*)-\frac{1}{p}(\tilde f_i^*\circ F^*_{\tilde X_{i,\log}} -\tilde f_j^*\circ F^*_{\tilde X_{j,\log}})\\
&=& \frac{F^*_{\tilde Y_{i,\log}}-F^*_{\tilde Y_{j,\log}}}{p}\circ f_i^*+F_{Y_{j}}^*\circ \frac{\tilde f_i^*-\tilde f_j^*}{p}-f_j^*\circ\frac{F^*_{\tilde X_{i,\log}}-F^*_{\tilde X_{j,\log}}}{p}-\frac{\tilde f_i^*-\tilde f_j^*}{p}\circ F_{X_i}^*\\
&=&ob(F_Y)_{ij}\circ f_i^*\circ d+F_{Y_{j}}^*\circ ob(f)_{ij}\circ d-f_j^*\circ ob(F_X)_{ij}\circ d.
\end{eqnarray*}
In the second equality, the last term vanishes because
$$
\frac{\tilde f_i^*-\tilde f_j^*}{p}\circ F_{X_i}^*=ob(f)\circ (d F_{X_i}^*)=0.
$$
\end{proof}
Now we turn the above equality into an equality required in the descent condition. The transition function of $V_1$ is given by
$$
a_{ij}:=\exp(ob(F_Y)_{ij}\cdot f_i^*\theta)\cdot\exp (F_{Y_j}^*(ob(f)_{ij}\cdot \theta)),
$$
while the transition function for $V_2$ is given by
$$
b_{ij}:=f_j^*\exp((ob(F_X)_{ij}\cdot \theta)).
$$
Then Lemma \ref{cech relation} implies the commutativity of the following diagram over $Y_{ij}$:
$$
\CD
  V_1|_{Y_i} @>\exp(\nu_{f_i}\cdot \theta)>> V_2|_{Y_i} \\
  @V a_{ij} VV @VVb_{ij}V  \\
   V_1|_{Y_j}  @>>\exp(\nu_{f_j}\cdot \theta)>  V_2|_{Y_j}.
\endCD
$$
The commutativity is nothing but the descent condition for the isomorphism $\exp(\nu_f\cdot\theta)$. So we are done.

\section{Semistability under pullback}
Semistablity is not always preserved under pullback. After all, semistability refers to some given ample line bundle and an ample line bundle does not necessarily pulls back to an ample line bundle. Even worse, in the postive characteristic case, there are well-known examples of semistable vector bundles over curves which pull back to unstable bundles under Frobenius morphism. 

For a polystable Higgs bundle with vanishing Chern classes in characteristic zero,  this is handled by 
the existence of Higgs-Yang-Mills metric-it is a harmonic bundle by this case and harmonic bundles pulls back to harmonic bundles. Consequently, the pullback of a polystable Higgs bundle with vanishing Chern classes is again polystable with vanishing Chern classes. For a semistable Higgs bundle with vanishing Chern classes, one takes a Jordan-H\"older filtration of the Higgs bundle and the semistability of the pullback follows from that of the polystable case.  In the following, we provide a purely algebraic approach to the semistable case. We proceed to the proof of Theorem \ref{semistability}.
\begin{proof}
We resume the notations of Theorem \ref{semistability}. Since taking Chern class commutes with pullback, the statement about vanishing Chern classes of the pullback is trivial. We focus on the semistability below. In the following discussion, we choose and then fix an arbitrary ample line bundle $L$ (resp. $M$) over $X$ (resp. $Y$). We consider first the char $p$ setting. Fix a $W_2(k)$-lifting $\tilde f: \tilde Y_{\log}=(\tilde Y, \tilde B)\to \tilde X_{\log}=(\tilde X, \tilde D)$. First, we observe that the proof of Theorem A.4 \cite{LSZ} works verbatim for a semistable logarithmic Higgs bundle, so that there exists a filtration $Fil_{-1}$ on $E$ such that $\Gr_{Fil_{-1}}(E,\theta)$ is semistable.  Now applying \cite[Theorem A.1]{LSZ}, \cite[Theorem 5.12]{La1} to the \emph{nilpotent} semistable Higgs bundle $\Gr_{Fil_{-1}}(E,\theta)$, we obtain a flow of the following form:
$$
	\xymatrix{
		(E,\theta)\ar[dr]^{Gr_{Fil_{-1}}} && (H_0,\nabla_0)\ar[dr]^{Gr_{Fil_0}} &&  (H_1,\nabla_1)\ar[dr]^{Gr_{Fil_1}} \\
		&(E_0,\theta_0) \ar[ur]^{C_{X_{\log}\subset \tilde X_{\log}}^{-1}} & & (E_1,\theta_1) \ar[ur]^{C_{X_{\log}\subset \tilde X_{\log}}^{-1}}&&
		\cdots,
	}
	$$
in which each Higgs term in bottom is semistable.  Next, because of Corollary \ref{main cor}, we may obtain the pullback flow as follows: 
$$
	\xymatrix{
		f^*(E,\theta)\ar[dr]^{Gr_{f^*Fil_{-1}}} && f^*(H_0,\nabla_0)\ar[dr]^{Gr_{f^*Fil_0}} &&  f^*(H_1,\nabla_1)\ar[dr]^{Gr_{f^*Fil_1}} \\
		&f^*(E_0,\theta_0) \ar[ur]^{C_{Y_{\log}\subset \tilde Y_{\log}}^{-1}} & & f^*(E_1,\theta_1) \ar[ur]^{C_{Y_{\log}\subset \tilde Y_{\log}}^{-1}}&&
		\cdots
	}
	$$
Now as the Higgs terms $(E_i,\theta_i)$s in the first flow are semistable of the same rank and of vanishing Chern classes, the set $\{(E_i,\theta_i)\}_{i\geq 0}$ form a bounded family. So the set $\{f^*(E_i,\theta_i)\}_{i\geq 0}$ also forms a bounded family. In particular, the degrees of subsheaves in $\{f^*E_i\}_{i\geq 0}$ have an upper bound $N$. Suppose $f^*(E,\theta)$ is unstable, that is, there exists a saturated Higgs subsheaf $(F,\eta)$ of positive degree $d$ in $f^*(E,\theta)$. Then $\Gr_{f^*Fil_{-1}}(F,\eta)\subset f^*(E_0,\theta_0)$ is a Higgs subsheaf of degree $d$. It implies that $\Gr_{f^*Fil_0}\circ C^{-1}_{Y_{\log}\subset \tilde Y_{\log}}(F,\eta)\subset f^*(E_1,\theta_1)$ is of degree $pd$.  Iterating this process, one obtains a subsheaf in $f^*(E_i,\theta_i)$ whose degree exceeds $N$. Contradiction. Therefore, $f^*(E,\theta)$ is semistable. 

Now we turn to the char zero case. By the standard spread-out technique, there is a regular scheme $S$ of finite type over $\Z$, and an $S$-morphism $\mathfrak{f}: (\sY,\sB)\to (\sX,\sD)$ and an $S$-relative logarithmic Higgs bundle $(\sE,\Theta)$ over $(\sX,\sD)$, together with a $k$-rational point in $S$ such that $\{\mathfrak f: (\sY,\sB)\to (\sX,\sD), (\sE,\Theta)\}$ pull back to $\{f: (Y,B)\to (X,D), (E,\theta)\}$. In above, we may assume that $\sX$ (resp. $\sY$) is smooth projective over $S$ and $\sD$ (resp. $\sB$) is an $S$-relative normal crossing divisor in $\sX$ (resp. $\sY$).  For a geometrically closed point $s\in S$ and a $W_2(k(s))$-lifting $\tilde s\to S$, we obtain a family $\mathfrak{f}_s: (\sY,\sB)_s\to (\sX,\sD)_s$ over $k(s)$ which is $W_2$-liftable. Once taking an $s\in S$ such that $\textrm{char}(k(s))\geq \rk(\sE_s)=\rk(E)$, we are in the previous char $p$ setting. Hence it follows that $\mathfrak{f}_s^*(\sE,\Theta)_s$ is semistable. From this, it follows immediately that $f^*(E,\theta)$ is also semistable.

\end{proof}


\begin{thebibliography}{5}




\bibitem[Fa]{Fa05} G. Faltings, A $p$-adic Simpson correspondence, Adv. Math. 198 (2005), 847-862.

\bibitem[Ha]{Ha77} R. Hartshorne, Algebraic Geometry, GTM 52, 1977, Springer.

\bibitem[LSZ]{LSZ15} G. Lan, M. Sheng, K. Zuo, Nonabelian Hodge theory in positive characteristic via exponential twisting, Math. Res. Lett. 22 (2015), no. 3, 859-879.

\bibitem[LSYZ]{LSYZ19} G. Lan, M. Sheng, Y. Yang, K. Zuo, Uniformization of p-adic curves via Higgs-de Rham flows, J. Reine Angew. Math. 747 (2019), 63-108.

\bibitem[La14]{La1} A. Langer, Semistable modules over Lie algebroids in positive characteristic, Doc. Math., vol 19, 509-540, 2014.

\bibitem[La19]{La2} A. Langer, Nearby cycles and semipositivity in positive characteristic, arxiv 1902.05745v2.

\bibitem[LSZ]{LSZ} G. Lan, M. Sheng, K. Zuo, Semistable {Higgs} bundles, periodic {Higgs} bundles and representations of algebraic fundamental groups, Journal of the European Mathematical Society, vol. 21, 3053–3112, 2019.

\bibitem[OV]{OV} A. Ogus, V. Vologodsky, Nonabelian Hodge theory in characteristic $p$, Publ. Math. Inst. Hautes \'{e}tudes Sci. 106 (2007), 1-138.

\bibitem[Sim]{Sim92} C. Simpson,  Higgs bundles and local systems, Inst. Hautes \'Etudes Sci. Publ. Math. No. 75 (1992), 5-95.
	




\end{thebibliography}
\end{document}